\def\draftdate{\today}
\newtheorem*{rep@theorem}{\rep@title}
\newcommand{\newreptheorem}[2]{%
\newenvironment{rep#1}[1]{%
 \def\rep@title{#2 \ref{##1}}%
 \begin{rep@theorem}}%
 {\end{rep@theorem}}}
\newtheorem{thm}{Theorem}[section]
\newaliascnt{lem}{thm}
\newaliascnt{prop}{thm}
\newaliascnt{cor}{thm}
\newtheorem{prop}[prop]{Proposition}
\newtheorem{cor}[cor]{Corollary}
\theoremstyle{definition}
\newtheorem{defn}[thm]{Definition}
\theoremstyle{definition}
\newtheorem{ex}[thm]{Example}
\theoremstyle{definition}
\newtheorem{rem}[thm]{Remark}
\theoremstyle{definition}
\theoremstyle{definition}
\newtheorem{nota}[thm]{Notation}
\newcommand{\oo}{\infty}
\newcommand{\la}{\langle}
\newcommand{\ra}{\rangle}
\mathchardef\dash="2D
\renewcommand{\L}{\mathrm{L}}
\DeclareMathOperator{\Ind}{Ind}
\DeclareMathOperator{\Hom}{Hom}
\DeclareMathOperator{\cell}{cell}
\DeclareMathOperator{\Perf}{Perf}
\DeclareMathOperator{\Mod}{Mod}
\DeclareMathOperator{\Fun}{Fun}
\DeclareMathOperator{\idemtimes}{\widehat{\otimes}}
\DeclareMathOperator{\Stab}{Stab}
\DeclareMathOperator{\Idem}{Idem}
\DeclareMathOperator{\Id}{Id}
\DeclareMathOperator{\fincol}{f. c.}
\DeclareMathOperator{\Res}{Res}
\DeclareMathOperator{\LKan}{LKan}
\def\cA{\mathcal A}\def\cB{\mathcal B}\def\cC{\mathcal C}\def\cD{\mathcal D}
\def\cE{\mathcal E}\def\cF{\mathcal F}\def\cG{\mathcal G}
\def\cL{\mathcal L}
\def\cM{\mathcal M}\def\cN{\mathcal N}\def\cO{\mathcal O}\def\cP{\mathcal P}
\def\cS{\mathcal S}\def\cT{\mathcal T}
\def\SS{\mathbb S}
\begin{document}
\scalefont{1.1}

\title{Differential Graded Categories are k-linear Stable $\infty$-Categories}
\author{Lee Cohn}
\address{Department of Mathematics, The University of Texas, Austin, TX \ 78712}
\email{lcohn@math.utexas.edu}
\date{\draftdate}

\begin{abstract}
\scalefont{1.1}
We describe a comparison between pretriangulated differential graded categories and certain stable $\infty$-categories. Specifically, we use a model category structure on small differential graded categories over $k$ (a commutative ring with unit) where the weak equivalences are the Morita equivalences, and where the fibrant objects are in particular pretriangulated differential graded categories. We show the underlying $\oo$-category of this model category is equivalent to the $\oo$-category of small idempotent-complete $k$-linear stable $\oo$-categories.    
\end{abstract}

\maketitle

\centerline{Contents}
\renewcommand\contentsname{}
\tableofcontents

\section{Introduction}
In this paper we prove the folklore theorem (e.g. implicitly assumed in works such as \cite{BFN}, \cite{G}) that pretriangulated differential graded categories over $k$ (a commutative ring with unit) are $k$-linear stable $\oo$-categories. The basic setting for our work is the theory of $\oo$-categories (particularly stable $\oo$-categories), which provide a tractable way to handle  ``homotopical categories of homotopical categories" as well as homotopically meaningful categories of homotopical functors.  Thus, our comparison between pretriangulated dg categories and stable $\oo$-categories allows for an interpretation of a  ``homotopical category of homotopical, algebraic categories."  

Triangulated categories arise naturally in geometry (e.g. as derived categories or as stable homotopy categories).  However, triangulated categories lack homotopical properties such as functorial mapping cones.  Differential graded categories and stable $\oo$-categories are enhancements of triangulated categories that have such homotopical properties.  Differential graded categories are advantageous for triangulated enhancements because one can often perform explicit computations in this setting.  Stable $\oo$-categories are an alternative enhancement of triangulated categories that are more general than dg categories and are amenable to universal properties. 

In \cite{To1} and \cite{Tab2}, T\"{o}en and Tabuada construct a model category structure on the category of differential graded categories, $Cat^k_{dg}$, in which the weak equivalences are the Dwyer-Kan (DK)-equivalences.  That is, the weak equivalences are dg functors that induce both quasi-isomorphisms on the hom complexes and an ordinary categorical equivalence of homotopy categories. Following \cite{TabM}, we also use a ``Morita" model structure on dg categories, where the weak equivalences are now the larger class of dg functors that induce DK-equivalences on their categories of modules.  In this model category, fibrant dg categories are, in particular, pretriangulated.    

One of the impediments to working with these model categories is that while there is a monoidal structure on the category of dg categories given by the pointwise tensor product of dg categories, this tensor product does not respect either model category structure. Thus, dg categories do not form a monoidal model category for the pointwise tensor structure.  There are examples of two cofibrant dg categories whose pointwise tensor product is not cofibrant (with respect to both model category structures).  This is one reason why our comparison between dg categories and $k$-linear stable $\oo$-categories is not completely trivial.  To resolve this issue, we use ideas inspired by \cite{To1}, \cite{CT}, and \cite{BGT2} to derive the tensor product of dg categories via ``flat" dg categories. 

Following \cite{Tab1}, we use a Quillen equivalence between dg categories over $k$ (with the Morita model structure) and categories enriched in $Hk$-module symmetric spectra (also with the Morita model structure) to reformulate our questions in the language of spectral categories. Here, an $Hk$-module spectrum is a spectrum $A$, with an action map $Hk \wedge A \to A$ that satisfies the usual associativity and identity axioms. This Quillen equivalence may be interpreted as an enriched Dold-Kan correspondence because the mapping complexes in dg categories are chain complexes while the mapping complexes in spectral categories may be modeled using simplicial sets. 
 
Given an $E_{\oo}$-ring $R$ (e.g. $Hk$), $R$ may be considered as a homotopy coherent commutative algebra object in the $\oo$-category of spectra, $Sp$, and also as a homotopy coherent commutative algebra object in the model category of symmetric spectra $\cS$. As an object in $\cS$, we may form its category of perfect modules $\Perf(R)$, and restrict to the subcategory of perfect cell R-modules, $\Perf(R)^{\cell}$.  A perfect $R$-module is cell if it can be iteratively obtained from homotopy cofibers of maps whose sources are wedges of shifts of $R$. Let $Cat_{\cS}$ denote the category of spectral categories. Then a module over $\Perf(R)^{\cell}$ is a spectral category $\cC$ with an action map $\Perf(R)^{\cell} \wedge \cC \to \cC$ that preserves finite colimits and tensors with finite spectra in each variable and is equipped with the usual associativity maps. We let $\Mod_{\Perf(R)^{\cell}}(Cat_{\cS})$ denote the spectral category of modules over $\Perf(R)^{\cell}$.

Let $Cat_{\cS}$ denote the category of spectral categories, $Cat_{\cS}^{flat}$ the category of flat spectral categories \ref{flatsc}, $W'$ the collection of Morita equivalences in $\Mod_{R \dash \Mod^{\cell}}(Cat_{\cS})$, and $W$ the collection of Morita equivalences in $Cat_{\cS}$. We also let the superscript ``$\otimes$" denote the extra structure of a symmetric monoidal $\oo$-category.

Using the notation above, there is a localization of $\oo$-categories (see \ref{modeloo})
$$\theta: N(Cat_{\cS}^{flat})^{\otimes} \to N(Cat_{\cS}^{flat})[W^{-1}]^{\otimes}.$$ 
Since the nerve is monoidal \ref{nerve}, the map $\theta$ descends to a well-defined functor 
$$\theta: N(\Mod_{\Perf(R)^{\cell}}(Cat_{\cS})^{flat})[W'^{-1}] \to \Mod_{\Perf(R)}(N(Cat_{\cS}^{flat})[W^{-1}]^{\otimes})$$
because the nerve of a module category over $\Perf(R)^{\cell}$ in $Cat_{\cS}$ is naturally a module category over the $\oo$-category $\Perf(R)$.  
Our main theorem is: 

\begin{repthm}{main}
The functor 
$$\theta: N(\Mod_{\Perf(R)^{\cell}}(Cat_{\cS})^{flat})[W'^{-1}] \simeq \Mod_{\Perf(R)}(N(Cat_{\cS}^{flat})[W^{-1}]^{\otimes})$$
is an equivalence of $\oo$-categories.
\end{repthm}

In other words, this theorem asserts an equivalence between module categories over the spectral category $\Perf(R)^{\cell}$ and module categories over the stable $\oo$-category $\Perf(R)$ up to Morita equivalence. 

Next, we use an equivalence between module categories over $\Perf(R)^{\cell}$ with all finite colimits and closed under tensors with finite spectra and categories enriched in $R$-modules with all finite colimits and closed under tensors with finite spectra to prove the following. 
\begin{repprop}{newcomparison}
There is an equivalence of $\oo$-categories
$$N(Cat_{R \dash \Mod})[W^{-1}] \simeq N(\Mod_{\Perf(R)^{\cell}}(Cat_{\cS}))[W'^{-1}].$$
\end{repprop}

Using our main theorem above we deduce the following corollary.
\begin{repcor}{cor1} 
There is an equivalence of $\oo$-categories
$$ N(Cat_{R \dash \Mod})[W^{-1}] \simeq \Mod_{\Perf(R)}(N(Cat_{\cS}^{flat})[W^{-1}]^{\otimes}).$$
\end{repcor}

In other words, our corollary states that the underlying $\oo$-category of the model category of categories enriched in $R$-module spectra localized at the Morita equivalences and the $\oo$-category of module categories over the $\oo$-category $\Perf(R)$ in $N(Cat_{\cS}^{flat})[W^{-1}]$ are equivalent.  By \cite[3.8]{BGT2}, there is an equivalence
$$N(Cat_{\cS}^{flat})[W^{-1}]^{\otimes} \simeq (Cat^{perf}_{\oo})^{\otimes},$$
where $Cat^{perf}_{\oo}$ is the $\oo$-category of small stable idempotent-complete $\oo$-categories with exact functors.  
Thus, 
$$N(Cat_{R \dash \Mod})[W^{-1}] \simeq \Mod_{\Perf(R)}((Cat^{perf}_{\oo})^{\otimes}).$$ 

Applying this theorem to the Eilenberg-MacLane ring spectrum $Hk$ yields the following corollary.
\begin{repcor}{cor2}
There is an equivalence of $\oo$-categories
$$ N(Cat_{Hk \dash \Mod})[W^{-1}] \simeq Mod_{\Perf(Hk)}((Cat^{perf}_{\oo})^{\otimes}).$$
\end{repcor}

We now combine this result with the Quillen equivalence \cite{Tab1} 
$$Cat_{Hk \dash \Mod} \simeq Cat^k_{dg}$$ 
and \ref{pretridgsc} to obtain our main corollary.

\begin{repcor}{cor3} 
There is an equivalence of $\oo$-categories
$$ N(Cat^k_{dg})[W^{-1}] \simeq Mod_{\Perf(Hk)}((Cat^{perf}_{\oo})^{\otimes}).$$ That is, there is an equivalence between the underlying $\oo$-category of the model category of dg categories over $k$ localized at the Morita equivalences and the $\oo$-category of small idempotent-complete $k$-linear stable $\oo$-categories. 
\end{repcor}

Using the $\Ind$-construction we have a symmetric monoidal equivalence between $Cat^{perf}_{\oo}$ and compactly-generated presentable stable $\oo$-categories with functors that preserve colimits and compact objects (denoted $\cP r^{L}_{st, \omega}$).  This implies the following proposition.
\begin{repprop}{smalltolarge}
There is an equivalence of $\oo$-categories
$$\Ind: \Mod_{\Perf(Hk)}((Cat^{perf}_{\oo})^{\otimes}) \simeq \Mod_{Hk \dash \Mod}((\cP r^{L}_{st, \omega})^{\otimes})$$
\end{repprop}

We now have our second main corollary.
\begin{repcor}{cor4}
There is an equivalence of $\oo$-categories
$$ N(Cat^k_{dg})[W^{-1}] \simeq \Mod_{Hk \dash \Mod}((\cP r^{L}_{st, \omega})^{\otimes}).$$ In other words, the underlying $\oo$-category of the Morita model category structure on $Cat^k_{dg}$ is equivalent to the $\oo$-category of compactly-generated presentable $k$-linear stable $\oo$-categories with functors that preserve colimits and compact objects. 
\end{repcor}

Our main corollary is, first and foremost, a rectification result.  That is, given an idempotent-complete $k$-linear small stable $\oo$-category $\cC$, there exists a  pretriangulated dg category corresponding to $\cC$.  Thus, the category of dg categories localized at the Morita equivalences can be interpreted as a model for the $\oo$-category of small idempotent-complete $k$-linear stable $\oo$-categories. 

Acknowledgments: The author wishes to thank Andrew Blumberg for all of his help and guidance. The author also thanks  David Ben-Zvi, Owen Gwilliam, Aaron Royer, and Pavel Safronov for helpful discussions and/or comments.

\section{Differential Graded Categories}
\subsection{Review of Differential Graded Categories} \mbox{}\\
In this section we review the definition of a differential graded category, give several basic examples, and recall the model category structure on the category of dg categories given by T\"{o}en and Tabuada. The reader is referred to \cite{K} or \cite{To2} for a general introduction to dg categories.

We now fix $k$ any commutative ring with unit. Let $Ch(k)$ denote the category of chain complexes of $k$-modules.
\begin{defn}
A differential graded (dg) category $\cT$ over $k$ consists of the following data.
\begin{itemize}
\item A class of objects $Ob(\cT)$.
\item For every pair of objects $x,y \in \cT$ a complex $\cT(x,y) \in Ch(k)$.
\item For every triple $x,y,z \in \cT$ a composition $\cT(y,z) \otimes \cT(x,y) \to \cT(x,z)$.
\item A unit for every $x \in \cT$, $e_x: k \to \cT(x,x)$.
\\
These data are required to satisfy the usual associativity and unit conditions. 
\end{itemize}
\end{defn}

We say that a dg category is small if its class of objects $Ob(\cT)$ forms a set. 
\begin{nota}
We will denote the category of small dg categories by $Cat^k_{dg}$.
\end{nota}

The definition of a dg functor is the usual definition of an enriched functor.
\begin{defn}
Let $\cT$ and $\cT'$ be dg categories. A dg functor $\cF: \cT \to \cT'$ consists of:
\begin{itemize}
\item A map on objects $\cF: Ob(\cT) \to Ob(\cT')$.
\item A map of chain complexes $\cF_{x,y}: \cT(x,y) \to \cT'(\cF (x),\cF (y))$ for all $x,y \in Ob(\cT)$,
\end{itemize}
which is compatible with the units and compositions in the obvious sense.
\end{defn}

Every dg category has an underlying ordinary category given by restricting to the $0^{th}$ cohomology group of each hom complex.
\begin{defn}
Let $\cT$ be a dg category. The homotopy category of $\cT$, denoted by $[\cT]$, is the ordinary category given by:
\begin{itemize}
\item The objects of $[\cT]$, $Ob([\cT])$, are the objects of $\cT$.
\item For every pair of objects $x,y \in Ob([\cT])$, the set of morphisms $[\cT](x,y) := H^0(\cT(x,y))$.
\end{itemize}
\end{defn}

\begin{ex} A dg algebra is a dg category with one object.
\end{ex}

\begin{ex}
Chain complexes $Ch(k)$ form a dg category. Given two chain complexes, one can form a chain complex of morphisms between them. 
\end{ex}

A fact that we will implicitly use in the sequel is
\begin{ex}
If $\cC$ is a dg category, then so is its opposite category $\cC^{op}$.
\end{ex}

In \cite[2.2.1]{Tab2}, Tabuada shows that the category of dg categories over $k$ has a model category structure that we recall here. 
 
\begin{thm} The category of small dg categories, $Cat_{dg}^k$, has the structure of a combinatorial model category where 
\begin{itemize}
\item (W) The weak equivalences are the DK-equivalences.  That is, a dg functor $\cF: \cT \to \cT'$ is a weak equivalence if for any two objects $x,y \in \cT$ the morphism 
$\cT(x,y) \to \cT'(\cF(x),\cF(y))$ is a quasi-isomorphism of chain complexes and the induced functor $[\cF]: [\cT] \to [\cT']$ is an equivalence of categories.

\item(F) The fibrations are the DK-fibrations. That is, a dg functor $\cF: \cT \to \cT'$ is a fibration if for any two objects $x,y \in \cT$ the morphism 
$\cT(x,y) \to \cT'(\cF(x),\cF(y))$ is a fibration of chain complexes (i.e. is surjective in each degree), and for any isomorphism $u':x' \to y' \in [\cT']$ and any $y \in [\cT]$ such that $\cF(y)=y'$, there is an isomorphism $u: x \to y \in [\cT]$ such that $[\cF](u)=u'$.
\end{itemize}
\end{thm}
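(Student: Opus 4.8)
The plan is to produce this structure via the recognition theorem of J.~Smith for combinatorial model categories: exhibit explicit sets $I$, $J$ of generating cofibrations and generating trivial cofibrations and check Smith's hypotheses (this recovers T\"{o}en--Tabuada's construction). First I would record the formal properties of the ambient category. Since $Cat^k_{dg}$ is the category of small categories enriched in the locally presentable closed symmetric monoidal category $Ch(k)$, it is itself locally presentable; in particular it is complete and cocomplete, every object is small, and Quillen's small object argument applies to any set of maps. The class $W$ of DK-equivalences satisfies the two-out-of-three property and is closed under retracts, both inherited from the corresponding facts for quasi-isomorphisms in $Ch(k)$ and for equivalences of ordinary categories; and $W$ is an accessible subcategory of the arrow category of $Cat^k_{dg}$, since its defining conditions --- inducing a quasi-isomorphism on each Hom-complex, and inducing an essentially surjective functor on homotopy categories --- are cut out accessibly (full faithfulness of $[\cF]$ being already a consequence of the quasi-isomorphism condition, via $[\cT](x,y)=H^0\cT(x,y)$).

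For a complex $C\in Ch(k)$ write $\mathcal{P}(C)$ for the dg category with two objects $1,2$ and $\mathcal{P}(C)(1,1)=\mathcal{P}(C)(2,2)=k$, $\mathcal{P}(C)(1,2)=C$, $\mathcal{P}(C)(2,1)=0$, and let $\underline{k}$ be the one-object dg category with endomorphism complex $k$. Using the standard generating cofibrations $S^{n-1}\to D^n$ and generating trivial cofibrations $0\to D^n$ of the projective model structure on $Ch(k)$, set
\[ I \;=\; \{\emptyset\to\underline{k}\}\cup\{\mathcal{P}(S^{n-1})\to\mathcal{P}(D^n)\}_{n\in\Z}, \qquad J \;=\; \{\underline{k}\to\mathcal{K}\}\cup\{\mathcal{P}(0)\to\mathcal{P}(D^n)\}_{n\in\Z}, \]
where $\mathcal{K}$ is a fixed small dg category with two objects equipped with a dg functor $\underline{k}\to\mathcal{K}$ that is simultaneously a relative $I$-cell complex (obtained by adjoining the second object, then a degree-$0$ morphism $1\to 2$ together with a candidate inverse, then two contracting homotopies, each step a pushout of a map in $I$) and a DK-equivalence. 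I would then verify: (i) a dg functor has the right lifting property against $I$ iff it is surjective on objects and induces a degreewise surjective quasi-isomorphism on every Hom-complex, and such functors are DK-equivalences; (ii) a dg functor has the right lifting property against $J$ iff it is a DK-fibration in the sense of the statement --- lifting against $\mathcal{P}(0)\to\mathcal{P}(D^n)$ produces surjectivity of $\cT(x,y)\to\cT'(\cF(x),\cF(y))$, and lifting against $\underline{k}\to\mathcal{K}$ produces the isomorphism-lifting clause; and (iii) every map in the saturation of $J$ is a DK-equivalence. Items (i) and (ii) are diagram chases, and the identity $I\text{-inj}=W\cap J\text{-inj}$ required by the recognition theorem then follows from the elementary observation that a DK-fibration which is a DK-equivalence is automatically surjective on objects (lift an isomorphism witnessing essential surjectivity along the isomorphism-lifting property); since also $J\subseteq I\text{-cof}$ by construction, granting (iii) the recognition theorem assembles $I$, $J$, $W$ into the asserted combinatorial model structure.

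The main obstacle is item (iii). For the maps $\mathcal{P}(0)\to\mathcal{P}(D^n)$ it is mild: pushing such a map out along a choice of two objects $x,y$ of a dg category $\cT$ enlarges the Hom-complexes only by summands built from the acyclic complexes $D^n$, so it changes neither the objects nor the cohomology, and this persists under transfinite composition. The real content is in the maps $\underline{k}\to\mathcal{K}$: one must compute the pushout of $\underline{k}\to\mathcal{K}$ along an arbitrary $\underline{k}\to\cT$ --- that is, along a choice of object $x\in\cT$ --- and show the resulting inclusion $\cT\to\cT'$ is a DK-equivalence. Here one cannot compute Hom-complex-wise: $\cT'$ has a new object $x'$, and each $\cT'(a,b)$ is an explicit but unwieldy colimit of tensor products of Hom-complexes of $\cT$ with Hom-complexes of $\mathcal{K}$, indexed by all words in the freely adjoined morphisms. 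The crux is the estimate that, because $\mathcal{K}$ was chosen so that its two objects are contractibly isomorphic, this colimit receives a quasi-isomorphism from $\cT(a,b)$, so that $\cT\to\cT'$ is fully faithful, $x'\cong x$ in $[\cT']$, and $[\cT']$ is $[\cT]$ with one extra isomorphic object; and that this property is preserved by the further pushouts and transfinite composites occurring in a general cell map. Once this is in place, $W$ has the closure properties needed, and the remainder is the formal machinery of the small object argument and Smith's recognition theorem.
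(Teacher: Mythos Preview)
The paper does not give a proof of this theorem at all: it is stated with attribution to Tabuada \cite[2.2.1]{Tab2} (and implicitly T\"oen \cite{To1}) and then used as input. So there is no ``paper's own proof'' to compare against; your proposal is a reconstruction of Tabuada's original argument, and as such it is on the right track.

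Your outline is essentially Tabuada's: the same generating sets $I$ and $J$, the same recognition principle, and the same identification of the hard step. A few remarks. First, what you invoke is closer to Kan's/Hovey's recognition theorem for cofibrantly generated model categories (you specify both $I$ and $J$ and check $I\text{-inj}=W\cap J\text{-inj}$ and $J\text{-cell}\subseteq W$) than to Smith's theorem proper (which takes only $I$ and an accessible $W$ and needs $I\text{-cof}\cap W$ closed under pushout and transfinite composition); either route works here, but be precise about which hypotheses you are actually verifying. Second, your discussion of (iii) correctly isolates the genuine content --- that pushouts of $\underline{k}\to\mathcal K$ along an object of $\cT$ are DK-equivalences --- but the sentence ``because $\mathcal K$ was chosen so that its two objects are contractibly isomorphic, this colimit receives a quasi-isomorphism from $\cT(a,b)$'' is exactly the nontrivial computation, not a consequence of the choice of $\mathcal K$ alone. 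In Tabuada's argument (and in the parallel arguments of Bergner for simplicial categories and of Lurie for general enriched categories) this step requires an explicit filtration of the Hom-complexes of the pushout by word length in the adjoined generators, together with an acyclicity argument for the associated graded; it does not fall out formally. If you intend this as a proof rather than a plan, that filtration argument must be supplied. Third, the closure of $J\text{-cell}\cap W$ under transfinite composition is not automatic from the single-pushout case and also needs the filtration analysis (or an appeal to the fact that DK-equivalences, being detected by homology and $\pi_0$, are stable under filtered colimits of monomorphisms).

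In short: your strategy coincides with the one the paper is citing, the decomposition is correct, and the only gap is that item (iii) is asserted rather than proved --- which is also precisely where the cited reference does the work.
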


We will refer to this model structure as the Dwyer-Kan (or DK)-model structure on $Cat_{dg}^k$.

\begin{rem}
It is easy to see that $\cF: \cT \to \cT'$ is a DK-equivalence if and only if it induces an equivalence of graded homotopy categories $H^*(\cF): H^*(\cT) \to H^*(\cT')$.
\end{rem}

There is a pointwise tensor product of two dg categories inducing a monoidal structure on the category of dg categories.
\begin{defn} Let $\cC$ and $\cD$ be two dg categories. The pointwise tensor product $\cC \otimes \cD$ is the dg category specified by
\begin{itemize}
\item The objects of $\cC \otimes \cD$, $Ob( \cC \otimes \cD )$, are given by pairs of objects $(c,d)$, where $c \in \cC$ and $d \in \cD$.
\item  For every pair of objects $(c,d) , (c',d') \in \cC \otimes \cD$, the morphism complex \\ $\cC \otimes \cD((c,d),(c',d')) := \cC(c,c') \otimes_k \cD(d,d')$.
\end{itemize}

\end{defn}

For us, an important aspect of the model category structure is that it does not respect the monoidal structure of dg categories given by the pointwise tensor product. For instance, the following is known \cite[Exercise 14]{To2}.
\begin{ex}
Let $\Delta^1_k$ be the dg category with two objects 0 and 1 and with $\Delta^1_k(0,0)=k$, $\Delta^1_k(1,1)=k$, $\Delta^1_k(0,1)=k$, $\Delta^1_k(1,0)=0$.
Then $\Delta^1_k$ is a cofibrant dg category.  However, $\Delta^1_k \otimes \Delta^1_k$ is not a cofibrant dg category.  
\end{ex}

In the sequel, we will derive the pointwise tensor product of dg categories.

\subsection{The Morita Model Category Structure for Differential Graded Categories}\mbox{}\\
In this section we recall that there is a model category structure on $Cat_{dg}^k$ where the weak equivalences are the Morita equivalences.  Namely, we further localize the model category on $Cat_{dg}^k$ given in the previous section by the Morita equivalences.  We refer to this model category structure as the Morita model category structure on $Cat_{dg}^k$.  The Morita model category structure will be the one we use in our comparison theorem with $k$-linear stable $\oo$-categories. We begin by recalling several definitions in the theory of modules over a dg category.

\subsubsection{Differential Graded Categories of Modules}\mbox{}\\
We give a brief account of dg categories of modules which is largely based on \cite[2.3]{To2}.

\begin{defn} Given a dg category $\cT$ the category of right $\cT$-modules, $\hat{\cT}$ is defined to be $\Fun_{dg}(\cT^{op},Ch(k))$. That is, a right $\cT$-module is a functor from $\cT^{op} \to Ch(k)$.
\end{defn}

\begin{ex}
If $\cT$ is a dg category with one object, then $\cT$ has the same data as a dg algebra $A$.  The category right $\cT$-modules is equivalent to the category of right dg modules over the dg algebra $A$.
\end{ex}

\begin{rem} The category $\hat{\cT}$ can be given the structure of a combinatorial model category in which the weak equivalences are the pointwise equivalences in $Ch(k)$ and the fibrations are the pointwise fibrations in $Ch(k)$.  The generating cofibrations and generating acyclic cofibrations are the maps $\cT(-, x) \otimes f $ for $x \in \cT$ and $f$
varying through the generating cofibrations and generating acyclic cofibrations, respectively, of the model structure on chain complexes.  The representable right $\cT$-modules $\cT(-,x)$ are both cofibrant and compact.
\end{rem}
We will denote by $\hat{\cT}^{cf}$ the category of fibrant and cofibrant right $\cT$-modules, and by $\cD(\cT)$ the derived category of the dg category $\cT$.  As usual, there is an equivalence $[\hat{\cT}^{cf}] \simeq \cD(\cT)$. Thus, $\hat{\cT}^{cf}$ can be viewed as the dg enhancement of the derived category of $\cT$.

\begin{ex}
If $\cT$ is a dg category with one object, then $\cT$ has the same data as a dg algebra $A$.  The category $\cD(\cT)$ is the derived category of the dg algebra $A$.
\end{ex}

%%%%
\subsubsection{Pretriangulated Differential Graded Categories}\mbox{}\\
We give a brief account of pretriangulated dg categories. Our definition for a pretriangulated dg category is a version of the definition of a pretriangulated spectral category in \cite[5.4]{BM}.

\begin{defn} \label{pretri1}
A dg category $\cT$ is called pretriangulated if 
\begin{enumerate}
\item There is an object $0$ in $\cT$ such that the right $\cT$-module $\cT(-,0)$ is homotopically trivial (weakly equivalent to the constant functor with value the
complex $0$).
\item Whenever a right $\cT$-module $M$ has the property that $\Sigma M$ is weakly equivalent to a representable $\cT$-module $\cT(-, c)$ (for some object $c \in \cT$), then $M$ is weakly equivalent to a representable $\cT$-module $\cT(-, d)$ for some object $d \in \cT$. (The suspension of a $\cT$-module $M$ refers to a pointwise shift when evaluated on each object of $\cT$).
\item Whenever the right $\cT$-modules $M$ and $N$ are weakly equivalent to representable $\cT$-modules $\cT(-, a)$ and $\cT(-, b)$ respectively, the homotopy
cofiber of any map of right $\cT$-modules $M \to N$ is weakly equivalent to a representable $\cT$-module.
\end{enumerate}
\end{defn}

\begin{ex}
If $X$ is a scheme over $k$, then its category of unbounded complexes of sheaves of $\cO_X$-modules is a pretriangulated dg category.
\end{ex}

\begin{rem}
Note that this definition promises a $0$ object in $Ho(\cT)$, as well as shift functors on $Ho(\cT)$.  
\end{rem}

We now relate our notion of pretriangulated, \ref{pretri1}, to that introduced by Bondal and Kapranov in \cite[Definition 2]{BK} and Drinfeld \cite[2.4]{Drinfeld}.
\begin{defn} \label{pretri2} To a dg category $\cA$, Bondal and Kapranov associate another dg category, $\cA^{pre-tr}$, where the objects are formal expressions 
$$\{ (\oplus_{i=1}^n C_i[r_i],q), C_i \in \cA, r_i \in \mathbb{Z}, q=(q_{ij}) \},$$
 where each $q_{ij} \in \Hom(C_i,C_j)[r_j-r_i]$ is homogenous of degree 1 and $dq+q^2=0$. If $C,C' \in \cA^{pre-tr}$, $C=(\oplus_{j=1}^n C_j[r_j],q)$ and $C'=(\oplus_{i=1}^n C'_i[r'_i],q')$, then the complex $\Hom(C,C')$ is the space of matrices $f=(f_{ij}), f_{ij} \in \Hom(C_j,C'_i)[r'_i-r_j]$ and composition is given by matrix multiplication. The differential $d: \Hom(C,C') \to \Hom(C,C')$ is given by 
 $$df := d'f+q'f- (-1)^lfq, \mbox{ if deg} (f_{ij})=l,$$ 
where $d' := (df_{ij})$. \\
Given a morphism $f: A \to B $ in $\cA$, the cone on f, $\text{cone}(f)$, is defined to be the object $(A[1] \oplus B, q) \in A^{pre-tr}$, where $q_{12}=f$ and $q_{11}=q_{21}=q_{22}=0$.\\
The dg category $\cA$ is said to be pretriangulated if $0 \in \cA^{pre-tr}$ is homotopic to an object in $\cA$, if for every $A \in \cA$, the object $A[n] \in A^{pre-tr}$, is homotopic to an object in $\cA$, and for every closed morphism $f$ of degree 0 in $\cA$, $\text{cone}(f) \in \cA^{pre-tr}$, is homotopic to an object in $\cA$.
\end{defn}

The concept behind definitions \ref{pretri1} and \ref{pretri2} of a pretriangulated dg category is for our dg categories to be pointed, closed under shifts, and closed under the formation of mapping cones. Thus
\begin{prop} The two notions of pretriangulated dg categories \ref{pretri1} and \ref{pretri2} agree.
\end{prop}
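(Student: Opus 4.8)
The plan is to reduce both definitions to a single, manifestly symmetric criterion inside the derived category $\cD(\cA)$. Recall the dg Yoneda embedding $h\colon\cA\to\hat\cA$, $a\mapsto\cA(-,a)$; by the enriched Yoneda lemma $\hat\cA(h_a,h_b)\cong\cA(a,b)$, so $h$ is quasi-fully faithful and lands in cofibrant $\cA$-modules. Following Bondal--Kapranov \cite{BK}, Drinfeld \cite{Drinfeld} and Keller \cite{K}, I would extend $h$ to a dg functor $\tilde h\colon\cA^{pre-tr}\to\hat\cA$ by sending a twisted complex $(\oplus_i C_i[r_i],q)$ to the totalization of the shifted representables $h_{C_i}[r_i]$ twisted by $q$; the key computation is that the Bondal--Kapranov differential $df=d'f+q'f-(-1)^lfq$ is precisely the differential on the internal hom complex between two such totalizations, so that $\tilde h$ is again quasi-fully faithful. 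Since each totalization is a finite iterated mapping cone of shifts of representables, passing to homotopy categories and then to $\cD(\cA)$ yields a fully faithful functor $H^0(\cA^{pre-tr})\hookrightarrow\cD(\cA)$ whose essential image is the smallest full triangulated (not necessarily thick) subcategory $\langle\cA\rangle\subseteq\cD(\cA)$ containing the representables. Under this embedding the formal operations of Definition \ref{pretri2} become homotopical ones --- the formal $0$ goes to the zero module, $A[n]$ to $\Sigma^n h_A$, and $cone(f)$ to the homotopy cofiber of $h(f)$ --- and, since $\tilde h$ is quasi-fully faithful and $\cA$ is a full dg subcategory of $\cA^{pre-tr}$, the phrase ``$X\in\cA^{pre-tr}$ is homotopic to an object of $\cA$'' translates into ``$\tilde h(X)$ is weakly equivalent to a representable $\cA$-module''.

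With this dictionary I would show that each of \ref{pretri1} and \ref{pretri2} is equivalent to the statement that $h\colon H^0(\cA)\to\langle\cA\rangle$ is an equivalence of categories. For \ref{pretri2}: every object of $\cA^{pre-tr}$ is a finite iterated cone of shifts of objects of $\cA$, and in $\cA^{pre-tr}$ the cone and shift operations are unchanged up to homotopy when the source and target are replaced by homotopy-equivalent objects; so by induction on the length of a twisted complex, Definition \ref{pretri2} is equivalent to $h$ being essentially surjective onto $\langle\cA\rangle$, hence --- being quasi-fully faithful --- an equivalence. For \ref{pretri1}: let $\cE\subseteq\cD(\cA)$ be the full subcategory spanned by the objects isomorphic to representables, i.e.\ the essential image of $h$. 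Condition (1) says $\cE$ contains a zero object; since the homotopy cofiber of $X\to 0$ is $\Sigma X$, condition (3) applied to a map into a representable zero object shows $\cE$ is closed under suspension, and applied to an arbitrary map between representables shows $\cE$ is closed under mapping cones; condition (2) shows $\cE$ is closed under desuspension. Thus $\cE$ is a full triangulated subcategory of $\cD(\cA)$ containing the representables, so $\cE\supseteq\langle\cA\rangle$, while the reverse inclusion is automatic. Hence \ref{pretri1} holds iff $\cE=\langle\cA\rangle$ iff $h\colon H^0(\cA)\to\langle\cA\rangle$ is an equivalence --- the same criterion. Conversely, the common criterion immediately implies both sets of axioms via the dictionary.

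The step I expect to be the main obstacle is the intermediate identification of $\cA^{pre-tr}$ with the twisted complexes of representable $\cA$-modules: one must build $\tilde h$ compatibly with the model structure on $\hat\cA$ so that ``homotopic'' on the source matches ``weakly equivalent'' on the target, check through the sign conventions that the Bondal--Kapranov differential really is the internal-hom differential of the totalizations, and verify that the essential image of $H^0(\cA^{pre-tr})$ is the triangulated closure of the representables rather than, say, its idempotent completion. Granting that, both directions of the proposition are formal.
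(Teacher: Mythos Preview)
Your proposal is correct and follows essentially the same route as the paper: both construct the dg embedding $\tilde h=\alpha\colon\cA^{pre-tr}\hookrightarrow\hat\cA$ sending a twisted complex to the corresponding totalization of shifted representables, check it is (quasi-)fully faithful, and use it as a dictionary between ``homotopic to an object of $\cA$'' in $\cA^{pre-tr}$ and ``weakly representable'' in $\hat\cA$. Your reformulation via the intermediate criterion that $H^0(\cA)\to\langle\cA\rangle$ be an equivalence is a tidier packaging of the same argument---the paper simply states the dictionary and leaves the condition-by-condition translation of \ref{pretri1} and \ref{pretri2} to the reader.
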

\begin{proof} 
There is a functor $$\alpha: \cA^{pre-tr} \to Fun_{dg}(\cA^{op},Ch(k))$$
 by sending $K = (\oplus_{i=1}^n C_i[r_i],q)$ to the functor $\alpha(K): \cA^{op} \to Ch(k)$, where $\alpha(K)(B) = \oplus \Hom_{\cA}(B, C_i)[r_i]$ with differential $d + q$, where $d$ is the differential on $\oplus \Hom_{\cA}(B, C_i)[r_i]$.  \\
The functor $\alpha$ is an embedding of $\cA^{pre-tr}$ as a full dg subcategory of $\hat{\cA}$ and it sends the element $\text{cone}(f) \in \cA^{pre-tr}$ to the element $\text{cone}(\alpha(f)) \in \hat{\cA}$. 
If $K$ is homotopic to an object in $K' \in \cA$, then $\alpha(K)$ is certainly weakly representable by $\cA( -, K')$. Thus, \ref{pretri2} implies \ref{pretri1}. 
Conversely, if the functor $\alpha(K)$ is weakly representable by an object $K' \in \cA$, then $K$ is homotopic to $K'$ in $\cA^{pre-tr}$. This follows since $\cA( -, K')$ is a cofibrant-fibrant object of $\hat{\cA}$ and $\cA^{pre-tr}$ is a full subcategory of $\hat{\cA}$. Thus, a weak homotopy equivalence $\cA( -, K') \to \alpha(K)$ is a homotopy equivalence in $\cA^{pre-tr}$. We conclude that \ref{pretri1} implies \ref{pretri2}. 
\end{proof}

\begin{cor} Since the homotopy category of a pretriangulated dg category as in \ref{pretri2} is triangulated by \cite[Proposition 2]{BK}, so is the homotopy category of a pretriangulated dg category as in \ref{pretri1}.
\end{cor}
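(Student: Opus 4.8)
The plan is to deduce this immediately from the Proposition just proved together with \cite[Proposition 2]{BK}. Let $\cT$ be a dg category that is pretriangulated in the sense of \ref{pretri1}. By the Proposition, $\cT$ is then also pretriangulated in the sense of \ref{pretri2}, so Bondal and Kapranov's result applies and tells us that the homotopy category $[\cT^{pre-tr}]$ is triangulated. It therefore suffices to identify $[\cT]$ with $[\cT^{pre-tr}]$ as ordinary categories, because a triangulated structure transports along any equivalence of categories: one simply declares the shift functor and the class of distinguished triangles on $[\cT]$ to be the images, under the equivalence, of those on $[\cT^{pre-tr}]$, and every rotation and octahedral axiom then holds on $[\cT]$ because it holds on $[\cT^{pre-tr}]$.

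To produce the equivalence $[\cT] \simeq [\cT^{pre-tr}]$, I would consider the canonical dg inclusion $\iota : \cT \hookrightarrow \cT^{pre-tr}$ sending an object $C$ to the one-term twisted complex $(C[0],0)$. By construction of $\cT^{pre-tr}$ this is fully faithful on hom complexes, hence fully faithful on $H^0$, so $[\iota]$ is a fully faithful functor $[\cT] \to [\cT^{pre-tr}]$. What remains is essential surjectivity, i.e. that every twisted complex is isomorphic in $[\cT^{pre-tr}]$ to an object of $\cT$. The hypothesis \ref{pretri2} grants this outright for $0$, for shifts $C[n]$, and for cones $cone(f)$ of degree-$0$ closed morphisms. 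A general twisted complex $(\oplus_{i=1}^n C_i[r_i], q)$ is built from the $C_i[r_i]$ by $n-1$ iterated cone constructions (filter by the number of summands), so an induction on $n$, using that $cone$ is homotopy-invariant in each variable and that $\cT$ is closed under shifts and cones up to homotopy, shows that every object of $\cT^{pre-tr}$ is homotopy equivalent, hence isomorphic in $[\cT^{pre-tr}]$, to an object of $\cT$. Alternatively, one can reuse the embedding $\alpha : \cT^{pre-tr} \hookrightarrow \hat{\cT}$ from the proof of the Proposition: under \ref{pretri1} the essential image of $\alpha$ consists of weakly representable modules, and since representables are cofibrant-fibrant in $\hat{\cT}$, this forces every object of $\cT^{pre-tr}$ to be homotopy equivalent to an object of $\cT$.

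The one point requiring genuine (if routine) care is this last essential-surjectivity step: promoting closure of $\cT$ under a single shift and a single mapping cone, as literally stated in \ref{pretri2}, to closure under arbitrary twisted complexes, via the iterated-cone filtration and induction on the length. Everything else — full faithfulness of $\iota$, and the transport of the triangulated structure along the resulting equivalence — is formal. I would also pause to confirm that the "homotopy category" appearing in \cite[Proposition 2]{BK} coincides with the $H^0$-category $[\,\cdot\,]$ used in this paper, so that no reconciliation of conventions is needed before transporting the structure.
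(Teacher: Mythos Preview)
Your proposal is correct and follows the same route as the paper, which in fact offers no proof at all: the corollary is stated as an immediate consequence of the preceding Proposition together with the citation to \cite[Proposition 2]{BK}. You simply unpack more than the paper does---in particular, you make explicit the identification $[\cT]\simeq[\cT^{pre-tr}]$ and the induction showing that closure under shifts and single cones propagates to arbitrary twisted complexes---whereas the paper absorbs all of this into the black-box citation of Bondal--Kapranov.
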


We now wish to say that any small dg category embeds in a small pretriangulated category.  However, we must first define an embedding in the dg category setting.
\begin{defn}
A dg functor $\cF: \cT \to \cT'$ is a Dwyer-Kan embedding or DK-embedding if for any objects $x,y \in Ob(\cT)$, the map $\cT(x,y) \to \cT'(\cF (x),\cF (y))$ is a quasi-isomorphism.
\end{defn}

\begin{rem} This is equivalent to the definition of quasi-fully faithful in \cite[2.3]{To2}.
\end{rem}

\begin{prop} Any small dg category DK-embeds in a small pretriangulated dg category.
\end{prop}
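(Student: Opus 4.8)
The plan is to produce an explicit pretriangulated target for any small dg category $\cT$ by combining the module construction reviewed above with the Bondal--Kapranov pre-triangulated hull. First I would recall that the Yoneda embedding $h: \cT \to \hat{\cT} = Fun_{dg}(\cT^{op}, Ch(k))$, $x \mapsto \cT(-,x)$, is a DK-embedding: this is the dg enriched Yoneda lemma, giving a quasi-isomorphism $\cT(x,y) \xrightarrow{\sim} \hat{\cT}(\cT(-,x), \cT(-,y))$ since $\cT(-,x)$ is cofibrant and representable $\cT$-modules are mapped by enriched Yoneda to the correct hom complex. The issue is that $\hat{\cT}$ is not small, so I restrict: let $\cT^{pre-tr}$ be the Bondal--Kapranov dg category of \ref{pretri2}, which is small because its objects are finite formal twisted complexes over objects of $\cT$. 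The inclusion of $\cT$ into $\cT^{pre-tr}$ as the one-term complexes $(C[0], 0)$ is a dg functor, and it is fully faithful on the nose (the hom complex between one-term complexes is literally $\cT(C, C')$ with the same differential), hence a DK-embedding.

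The second step is to verify that $\cT^{pre-tr}$ is actually pretriangulated in the sense of \ref{pretri2}: it has a zero object (the empty twisted complex, which maps under $\alpha$ to the zero module), it is closed under shifts $A \mapsto A[n]$ by construction (shifting all the $r_i$), and it is closed under cones since $cone(f) = (A[1] \oplus B, q)$ is by definition an object of $\cT^{pre-tr}$, and more generally the cone of any closed degree-zero morphism between twisted complexes is again a twisted complex by an explicit matrix computation. Here one should check the nilpotence/Maurer--Cartan condition $dq + q^2 = 0$ is preserved under forming cones, which is the standard verification that the pre-triangulated hull is closed under the relevant operations; by the proposition just proved, this is equivalent to $\cT^{pre-tr}$ being pretriangulated in the sense of \ref{pretri1}. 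Composing, $\cT \hookrightarrow \cT^{pre-tr}$ is the desired DK-embedding into a small pretriangulated dg category.

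Alternatively, and perhaps more cleanly, I would take the target to be $\widehat{\cT}^{cf}_{c}$, the smallest full dg subcategory of $\hat{\cT}^{cf}$ containing the representables and closed under (de)suspensions and cones up to weak equivalence --- equivalently, a cofibrant-fibrant replacement applied to the image of $\cT^{pre-tr}$ under $\alpha$. One must then argue this is essentially small: its objects are weakly equivalent to twisted complexes, of which there is only a set up to isomorphism, so choosing one representative per isomorphism class in $[\widehat{\cT}^{cf}]$ of an object hit by $\alpha$ gives a small model. Being a full subcategory of the pretriangulated $\hat{\cT}^{cf}$ that is closed under the operations in \ref{pretri1}, it is itself pretriangulated, and $\cT$ embeds in it via $h$ followed by cofibrant-fibrant replacement, which remains a DK-embedding since replacement is a weak equivalence on hom complexes.

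The main obstacle is bookkeeping rather than conceptual: making sure the chosen pretriangulated hull is genuinely \emph{small} while remaining \emph{pretriangulated} and while the embedding stays a \emph{DK-embedding} (i.e.\ a quasi-isomorphism on hom complexes, not merely a DK-equivalence). The Bondal--Kapranov model $\cT^{pre-tr}$ handles smallness for free, so I expect the cleanest write-up routes through \ref{pretri2} and then invokes the equivalence of the two notions proved above; the only real computation is checking closure under cones at the level of twisted complexes, which is routine matrix algebra with the differential $df = d'f + q'f - (-1)^l f q$.
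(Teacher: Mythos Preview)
Your proposal is correct but follows a genuinely different route from the paper. The paper adapts \cite[5.5]{BM}: it bounds the size of modules by choosing a sufficiently large set $U$, forms the small dg category $U\hat{\cT}$ of $\cT$-modules with values in $U$, shows this is still a model category, and takes $\tilde{\cT}$ to be its full subcategory of cofibrant--fibrant objects; most of the paper's work is then verifying condition (3) of Definition~\ref{pretri1} for $\tilde{\cT}$ via a lifting-and-replacement argument in the module model category. You instead use the Bondal--Kapranov hull $\cT^{pre\text{-}tr}$ directly, observe it is small by inspection, check it is pretriangulated in the sense of Definition~\ref{pretri2} by the standard flattening-of-twisted-complexes computation, and then invoke the equivalence of the two definitions proved immediately before the proposition.

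Your route is shorter and more elementary for the bare statement: smallness is free and the pretriangulatedness of $\cT^{pre\text{-}tr}$ is exactly what Bondal--Kapranov built it for. The paper's route, by contrast, is uniform with the spectral-category setting of \cite{BM} and, more importantly, yields a target $\tilde{\cT}$ that is literally the cofibrant--fibrant objects of a small model category; the paper explicitly uses this extra structure in the definitions that follow (finite cell objects, $\hat{\cT}_{tri}$, $\hat{\cT}_{perf}$ are all phrased ``in the model for $\hat{\cT}$ constructed in the proof above''). So your argument proves the proposition cleanly, but the paper's choice is driven by what comes next rather than by economy of this particular proof. Your alternative approach through a closed-up subcategory of $\hat{\cT}^{cf}$ is in spirit closer to the paper, though the paper achieves smallness by a cardinality bound on values rather than by selecting representatives of isomorphism classes.
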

\begin{proof} We use an argument similar to \cite[5.5]{BM}. Given any dg category $\cT$, its category of modules, $\hat{\cT}$, is pretriangulated. We restrict to a small full subcategory of $\hat{\cT}$ as follows: For any set $U$, write $U \hat{\cT}$ for the full subcategory of $\hat{\cT}$ consisting of functors taking values in $Ch(k)$ whose underlying sets are in $U$.  Then $U \hat{\cT}$ is a small dg category, and if we choose $U$ to be the power set of a sufficiently large cardinal, then $U \hat{\cT}$ will be closed under the usual constructions of homotopy theory in $\hat{\cT}$, including the small objects argument constructing factorizations. In particular, $U \hat{\cT}$ is a model category with cofibrations, fibrations, and weak equivalences the maps that are such in $\hat{\cT}$.  We also have a closed model category $\Fun_{dg}(U \hat{\cT}, Ch(k))$ of modules over $U \hat{\cT}$.  

Let $\tilde{\cT}$ be the full subcategory of $U \hat{\cT}$ consisting of the cofibrant-fibrant objects. This will be our desired small pretriangulated dg category.

We have a closed model category $\Fun_{dg}(\tilde{\cT}, Ch(k))$ of modules over $\tilde{\cT}$. Properties (1) and (2) for $\tilde{\cT}$ in the definition of a pretriangulated dg category are clear. For property (3), consider a map of $\tilde{\cT}$-modules $\cM \to \cN$. Since the model structure on $\Fun_{dg}(\tilde{\cT}, Ch(k))$ is left proper, after replacing $\cM$ and $\cN$ with fibrant replacements, we obtain an equivalent homotopy cofiber, and so we can assume without loss of generality that $\cM$ and $\cN$ are fibrant. We assume that $\cM$ is weakly equivalent to $\tilde{\cT}(-,a)$ and $\cN$ is weakly equivalent to $\tilde{\cT}(-,b)$ for objects $a,b \in \tilde{\cT}$. Since $\tilde{\cT}(-,a)$ and $\tilde{\cT}(-,b)$ are cofibrant and $\cM$ and $\cN$ are fibrant, we can choose weak equivalences $\tilde{\cT}(-,a) \to \cM$ and $\tilde{\cT}(-,b) \to \cN$. Furthermore, as $\tilde{\cT}(a,b)$ and $\cN(a)$ are both fibrant, we can lift the composite map $\tilde{\cT}(-,a) \to \cN$ to a homotopic map $\tilde{\cT}(-,a) \to \tilde{\cT}(-,b)$. We obtain a weak equivalence on the homotopy cofibers.  The map $\tilde{\cT}(-,a) \to \tilde{\cT}(-,b)$ is determined by the map $a \to b$ by the Yoneda lemma.  A fibrant replacement of the homotopy cofiber in $U \hat{\cT}$ is in $\tilde{\cT}$ and represents the homotopy cofiber of $\cM \to \cN$ in $\Fun_{dg}(\tilde{\cT}, Ch(k))$.
\end{proof}

Moreover, we have that
\begin{prop} A dg functor between pretriangulated dg categories induces a triangulated functor on its homotopy categories. Moreover, a dg functor between pretriangulated dg categories is a DK-equivalence if and only if it induces an equivalence of homotopy categories.
\end{prop}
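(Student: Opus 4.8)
The plan is to transport everything into the Bondal--Kapranov model $\cA^{pre-tr}$ of Definition \ref{pretri2} (equivalently \ref{pretri1}), where the shift and the mapping cone exist \emph{strictly} and are visibly functorial. The first move: given a dg functor $\cF \colon \cA \to \cB$, extend it to a dg functor $\cF^{pre-tr} \colon \cA^{pre-tr} \to \cB^{pre-tr}$ by applying $\cF$ summand-by-summand to an object $(\oplus_i C_i[r_i], q)$, entry-by-entry to its twisting matrix $q=(q_{ij})$, and entry-by-entry to morphisms. That this is a dg functor is a direct check against the formulas of Definition \ref{pretri2}: it comes down to $\cF$ being a chain map compatible with composition, which forces $\cF^{pre-tr}$ to respect the twisted differential $df = d'f + q'f - (-1)^l fq$. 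By construction $\cF^{pre-tr}$ restricts to $\cF$ on $\cA \subset \cA^{pre-tr}$, and it commutes \emph{on the nose} with shifts and with cones: $\cF^{pre-tr}(K[n]) = (\cF^{pre-tr}K)[n]$ and $\cF^{pre-tr}(cone(f)) = cone(\cF f)$ for $f$ closed of degree $0$.

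For the first assertion, recall that $[\cA^{pre-tr}]$ is triangulated by \cite[Proposition 2]{BK}, with shift $[1]$ and distinguished triangles those isomorphic to a standard cone triangle $A \to B \to cone(f) \to A[1]$. Because $\cA$ is pretriangulated, I claim the fully faithful inclusion $[\cA] \hookrightarrow [\cA^{pre-tr}]$ is an equivalence of triangulated categories: it is essentially surjective since, by induction on the length of a twisted complex, every object of $\cA^{pre-tr}$ is an iterated cone of shifts of objects of $\cA$ and hence homotopic to an object of $\cA$; and it matches the homotopy-category shift and cone on $[\cA]$ with the ambient strict ones. The same applies to $\cB$. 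Since $[\cF^{pre-tr}]$ sends shifts to shifts and standard cone triangles to standard cone triangles, it is triangulated; conjugating by the equivalences $[\cA]\simeq[\cA^{pre-tr}]$ and $[\cB]\simeq[\cB^{pre-tr}]$ then shows that $[\cF]$ is triangulated.

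For the second assertion, the ``only if'' direction is immediate from the definition of a DK-equivalence. For ``if'', suppose $[\cF]$ is an equivalence; since a DK-equivalence is by definition a dg functor that is quasi-fully faithful and induces an equivalence of homotopy categories, it suffices to show each $\cF_{x,y}\colon \cA(x,y)\to\cB(\cF x,\cF y)$ is a quasi-isomorphism. The key computation is that, straight from the $Hom$-formula of Definition \ref{pretri2}, $\cA^{pre-tr}(x,y[n]) = \cA(x,y)[n]$ as complexes, so $H^n(\cA(x,y)) = [\cA^{pre-tr}](x,y[n])$ and, similarly, $H^n(\cB(\cF x,\cF y)) = [\cB^{pre-tr}](\cF x,(\cF y)[n])$, and under these identifications the map induced by $\cF^{pre-tr}$ is exactly $H^n(\cF_{x,y})$. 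Now $[\cF]$ is in particular fully faithful, and full faithfulness transports across the equivalences above to give that $[\cF^{pre-tr}]$ is fully faithful; evaluated on the pairs $(x,y[n])$ this says precisely that $H^n(\cF_{x,y})$ is an isomorphism for every $n$ and every $x,y$. Hence $\cF$ is quasi-fully faithful, and combined with the hypothesis that $[\cF]$ is an equivalence this is by definition the statement that $\cF$ is a DK-equivalence.

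The one step needing genuine care is the triangulated dictionary $[\cA]\simeq[\cA^{pre-tr}]$: essential surjectivity is exactly pretriangulatedness, but establishing that the inclusion is a triangulated functor requires matching the homotopy-category shift and cone on $[\cA]$ with the strict ones on $[\cA^{pre-tr}]$ and checking that the induction ``every twisted complex is an iterated cone'' is compatible with the homotopy equivalences produced by pretriangulatedness --- all standard Bondal--Kapranov/Drinfeld bookkeeping, but it should be spelled out. Once this is in place, the second assertion becomes formal, since both ``$\cF$ quasi-fully faithful'' and ``$[\cF]$ fully faithful'' collapse to the single statement that $[\cF^{pre-tr}]$ is fully faithful.
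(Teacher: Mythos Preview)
The paper states this proposition without proof, so there is no argument to compare against. Your proof is correct and is the standard Bondal--Kapranov route: extend $\cF$ to $\cF^{pre-tr}$, where shifts and cones are strict, and then use the equivalence $[\cA]\simeq[\cA^{pre-tr}]$ (valid exactly because $\cA$ is pretriangulated) to transport the evidently triangulated functor $[\cF^{pre-tr}]$ back to $[\cF]$. The second assertion is handled cleanly by your identification $H^n\cA(x,y)\cong[\cA^{pre-tr}](x,y[n])$, which converts quasi-full-faithfulness of $\cF$ into ordinary full faithfulness of $[\cF^{pre-tr}]$; since the square with horizontal equivalences $[\cA]\to[\cA^{pre-tr}]$ and $[\cB]\to[\cB^{pre-tr}]$ commutes, $[\cF]$ an equivalence forces $[\cF^{pre-tr}]$ to be one as well, and you are done.

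The only point worth tightening is the one you flag yourself: to get essential surjectivity of $[\cA]\hookrightarrow[\cA^{pre-tr}]$ you need that \emph{every} object of $\cA^{pre-tr}$, not just single shifts and single cones, is homotopic to an object of $\cA$. The induction on the length of the twisted complex works, but one must use that a cone of a closed degree-$0$ map between objects \emph{homotopy equivalent} to objects of $\cA$ is again homotopy equivalent to a cone of a map in $\cA$ (homotopy invariance of the cone in $\cA^{pre-tr}$). This is standard (see \cite{BK} or \cite[2.4]{Drinfeld}), and once stated the induction is immediate.
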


Now that we have developed the notion of pretriangulated dg categories, we wish to use a model category structure on dg categories where every fibrant dg category is in particular pretriangulated.  The following definitions lie in the model for $\hat{\cT}$ constructed in the proof above.

\begin{defn} Given a dg category $\cT$, a right $\cT$-module $X: \cT^{op} \to Ch(k)$ is called a finite cell object if it can be obtained from the initial $\cT$-module by a finite sequence of pushouts along generating cofibrations in $\hat{\cT}$.
\end{defn}

\begin{defn} Given a dg category $\cT$, its triangulated closure, $\hat{\cT}_{tri}$, is defined to be the full dg subcategory of $\hat{\cT}^{cf}$ of objects that have the homotopy type of finite cell objects. Given a dg category $\cT$, the category of perfect modules, $\hat{\cT}_{perf}$, is the thick closure of $\hat{\cT}_{tri}$. That is, it is the smallest full dg subcategory of $\hat{\cT}^{cf}$ containing objects that have the homotopy type of retracts of finite cell objects.
\end{defn}

\begin{rem} The category $\hat{\cT}_{perf}$ is an idempotent-complete pretriangulated dg category. 
\end{rem}

\begin{defn}
A dg functor $\cF: \cT \to \cT'$ is called a Morita equivalence if  $\cF$ induces a DK-equivalence of dg categories $\hat{\cT}_{perf} \to \hat{\cT'}_{perf}$.  
\end{defn}  

Using the definitions above, the following is proved in \cite[5.1]{TabM}.
\begin{prop} The category $Cat^k_{dg}$ admits the structure of a combinatorial model category whose weak equivalences are the Morita equivalences and whose cofibrations are the same as those in the DK-model structure on $Cat^k_{dg}$.
\end{prop}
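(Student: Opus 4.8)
The plan is to obtain this model structure as a left Bousfield localization of the DK-model structure on $Cat^k_{dg}$. That structure is combinatorial (as recalled above) and left proper (see \cite{Tab2}), so by the standard existence theorem for left Bousfield localizations of left proper combinatorial model categories, for any \emph{set} $S$ of morphisms there is a combinatorial model category $L_S Cat^k_{dg}$ with the same cofibrations, whose weak equivalences are the $S$-local equivalences and whose fibrant objects are the $S$-local dg categories. So it is enough to produce a set $S$ whose $S$-local equivalences are exactly the Morita equivalences; left properness, combinatoriality, and the agreement of cofibrations then all come for free.

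For $S$ I would take two dg functors (replaced, if necessary, by cofibrations between small cofibrant dg categories): the inclusion $\mathcal{K}\hookrightarrow\mathcal{K}'$, where $\mathcal{K}=\Delta^1_k$ is the cofibrant dg category of the example above (two objects and one non-identity closed degree-$0$ morphism) and $\mathcal{K}'$ is a fixed small model for its pretriangulated hull $\widehat{\mathcal{K}}_{tri}$; and the inclusion $\mathcal{N}\hookrightarrow\mathcal{N}'$ of the dg category with one object carrying a chosen idempotent in $H^0$ into the dg category where that idempotent is split. Next I would identify the $S$-local dg categories by unwinding the mapping-space conditions: $\Map(\mathcal{K}',\cD)\xrightarrow{\sim}\Map(\mathcal{K},\cD)$ says every closed degree-$0$ morphism of $\cD$ has an essentially unique cone, which --- taking cones of identity maps for a zero object and cones of $x\to 0$ for shifts, cf.\ \ref{pretri1} --- is precisely the statement that $\cD$ is pretriangulated; and $\Map(\mathcal{N}',\cD)\xrightarrow{\sim}\Map(\mathcal{N},\cD)$ says every homotopy idempotent in $\cD$ splits, i.e.\ $\cD$ is idempotent complete. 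So the $S$-local objects are exactly the idempotent-complete pretriangulated dg categories.

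The next step is to show that for any small $\cT$ the dg category $\widehat{\cT}_{perf}$ serves, up to DK-equivalence, as the $S$-localization of $\cT$. I would build a small cofibrant $\cT^+$ by transfinitely iterating two operations --- adjoin the cone of a morphism, and split an idempotent --- each being a cobase change of $\mathcal{K}\hookrightarrow\mathcal{K}'$ or $\mathcal{N}\hookrightarrow\mathcal{N}'$ along the dg functor classifying the relevant datum; the resulting map $\cT\to\cT^+$ is then a transfinite composite of cobase changes of maps in $S$, hence an $S$-local trivial cofibration, and a bookkeeping argument (using smallness of $\cT$, so only set-many closed degree-$0$ cycles and idempotents enter at each stage, and that the tower stabilizes) shows $\cT^+$ is $S$-local and $\cT^+\simeq\widehat{\cT}_{perf}$, naturally in $\cT$. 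Thus $\cT^+$ is a fibrant replacement of $\cT$ in $L_S Cat^k_{dg}$ modelling $\widehat{\cT}_{perf}$.

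Finally I would conclude as follows. In a left Bousfield localization a map is a weak equivalence iff its fibrant replacement is a weak equivalence of the underlying model category: apply two-out-of-three to the replacement square, using that $\cT\to\cT^+$ and $\cT'\to\cT'^+$ are $S$-local equivalences and that a map between $S$-local (hence fibrant) objects is an $S$-local equivalence exactly when it is a DK-equivalence. Hence $\cF\colon\cT\to\cT'$ is an $S$-local equivalence iff $\cF^+\colon\cT^+\to\cT'^+$ is a DK-equivalence iff (by the previous step) $\widehat{\cF}_{perf}\colon\widehat{\cT}_{perf}\to\widehat{\cT'}_{perf}$ is a DK-equivalence, which is precisely the definition of $\cF$ being a Morita equivalence; this identifies $L_S Cat^k_{dg}$ with the asserted model structure. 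The hard part will be the third paragraph: pinning down $\mathcal{K}'$ and $\mathcal{N}'$ and checking that the transfinite $S$-cellular construction reproduces $\widehat{(-)}_{perf}$ on the nose --- reconciling cone formation in the module category $\widehat{\cT}$ with cobase changes performed inside $Cat^k_{dg}$ --- together with verifying (or locating a reference for) left properness of the DK-model structure; everything else is model-categorical formalism.
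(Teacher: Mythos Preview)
The paper does not actually supply a proof of this proposition: it simply records it as ``the following is proved in \cite[5.1]{TabM}.'' Your outline is precisely the strategy of that reference --- obtain the Morita model structure as a left Bousfield localization of the DK-model structure at an explicit small set of dg functors, and identify the local objects with the idempotent-complete pretriangulated dg categories --- so at the level of architecture you are reproducing Tabuada's argument rather than offering an alternative.

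A few technical cautions are worth flagging. First, left properness of the DK-model structure is not in \cite{Tab2}; you will want a separate reference or argument for it (it is known, but locating it is part of the exercise you acknowledge). Second, your minimal choice of $S$ --- a single ``cone'' map $\mathcal{K}\hookrightarrow\mathcal{K}'$ together with an idempotent-splitting map --- is more parsimonious than Tabuada's original set, and you should check carefully that locality with respect to $\mathcal{K}\hookrightarrow\widehat{\mathcal{K}}_{tri}$ really forces \emph{desuspensions} as well as suspensions (condition (2) of Definition~\ref{pretri1}); since $\widehat{\mathcal{K}}_{tri}$ contains negative shifts this should go through, but it deserves a line. Third, the bookkeeping in your third paragraph --- that the transfinite $S$-cellular tower stabilizes at something DK-equivalent to $\widehat{\cT}_{perf}$ --- is genuinely the crux, and in practice it is easier to argue the other way around: show directly that $\widehat{\cT}_{perf}$ is $S$-local and that $\cT\to\widehat{\cT}_{perf}$ is an $S$-local equivalence (it is detected by mapping into any $S$-local $\cD$), rather than building $\cT^+$ by hand and then comparing.
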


\begin{defn} The model category structure defined above will be referred to as the Morita model category structure on $Cat_{dg}^k$.
\end{defn}

\begin{rem} \label{fibrant replacement} The functor $R: Cat^k_{dg} \to Cat^k_{dg}$ given by $$\cT \mapsto \hat{\cT}_{perf}$$ is a fibrant replacement functor in the Morita model structure on $Cat^k_{dg}$.  
\end{rem}

%%%%%%%%%%%%%%%%%%%%%%%%%%%%%%
The following property of the fibrant replacement functor $R$ is used crucially in \ref{newcomparison}. 
\begin{prop} \label{fibrant replacement preserves finite colimits} The value of a morphism in $Cat_{R \dash \Mod}$ under the functor $R$ preserves finite colimits and tensors of objects with finite spectra.
\end{prop}

\begin{proof} Let $\cF:\cC \to \cD$ be a morphism in $Cat_{R \dash \Mod}$.  Let  $R_{\cC} : \cC \to \hat{\cC}_{perf}$ and $R_{\cD} : \cD \to \hat{\cD}_{perf}$ 
be the embeddings of $\cC$ and $\cD$ into their categories of perfect modules. The functor $R_{\cF}: \hat{\cC}_{perf} \to \hat{\cD}_{perf}$ is given by the left Kan extension
\[\xymatrix{
\cC \ar[r]^{\cF} \ar[d]_-{R_{\cC}} & \cD \ar[r]^-{R_{\cD}} & \hat{\cD}_{perf} \\
\hat{\cC}_{perf} \ar@{-->}[urr]_-{R_{\cF}}
}\]
That is, $R_{\cF} := Lan_{R_{\cC}}(R_{\cD} \circ \cF)$. Thus, $R_{\cF}$ preserves finite colimits and tensors of objects with finite spectra by the universal property of the left Kan extension.
\end{proof}

\section{Spectral Categories and the Enriched Dold-Kan Correspondence}\mbox{}\\
The theory of spectral categories has a similar flavor to the theory of dg categories.  The reader is encouraged to read \cite{BGT1}, \cite{BGT2}, and \cite{BM} for further background in this direction.  

\subsection{Review of Spectral Categories}\mbox{}\\
We let $\cS$ denote the symmetric monoidal simplicial model category of symmetric spectra \cite{HSS}. When relevant, we will be using the stable model structure on $\cS$.
\begin{defn} A spectral category $\cA$ is given by
\begin{itemize}
\item A class of objects $Ob(\cA)$.
\item For each pair of objects $x,y \in Ob(\cA)$, a symmetric spectrum $\cA(x,y)$.
\item For each triple $x,y,z \in Ob(\cA)$ a composition in $\cS$,\\ $\cA(y,z) \wedge \cA(x,y) \to \cA(x,z)$.
\item For any $x \in Ob(\cA)$, a map $e_x: \mathbb{S} \to \cA(x,x)$ in $\cS$.
\end{itemize}
satisfying the usual associativity and unit conditions.  
\end{defn}

A spectral category is said to be small if its class of objects forms a set. We write $Cat_{\cS}$ for the category of small spectral categories and spectral (enriched) functors. As with dg categories, there is a DK-model structure on $Cat_{\cS}$, where the weak equivalences are the DK-equivalences \cite[5.10]{TabS}.

\begin{defn}
Let $\cA$ be a spectral category. The homotopy category of $\cA$, denoted by $[\cA]$, is the ordinary category given by:
\begin{itemize}
\item The objects of $[\cA]$, $Ob([\cA])$, are the objects of $\cA$.
\item For every pair of objects $x,y \in Ob([\cA])$, the set of morphisms $[\cA](x,y) := \pi_0(\cT(x,y))$.
\end{itemize}
\end{defn}

\begin{defn}
A spectral functor $\cF: \cA \to \cA'$ is a DK-equivalence if 
\begin{itemize} 
\item For every pair of objects $x,y \in Ob(\cA)$, the morphism in $\cS$, $\cF(x,y): \cA(x,y) \to \cB(x,y)$ is a stable equivalence of symmetric spectra.
\item The induced functor $[\cF]: [\cA] \to [\cB]$ is an equivalence of ordinary categories.
\end{itemize}
\end{defn}

Moreover, we have \cite[2.2.4]{BGT2}.
\begin{prop} The category $Cat_{\cS}$ with the DK-model structure is a combinatorial model category. Moreover, $Cat_{\cS}$ can be replaced by a Quillen equivalent simplicial model category.
\end{prop}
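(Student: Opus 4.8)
The plan is to deduce both assertions formally from Tabuada's construction of the DK-model structure on $Cat_{\cS}$ together with standard facts about combinatorial model categories, so that essentially all the real work is imported from \cite[5.10]{TabS}. First I would recall that the category $\cS$ of symmetric spectra with its stable model structure is a cofibrantly generated symmetric monoidal model category which is in addition locally presentable; fix once and for all small generating sets $I_{\cS}$ and $J_{\cS}$ of cofibrations and acyclic cofibrations. The category $Cat_{\cS}$ of small $\cS$-enriched categories is then locally presentable: it is equivalent to the category of algebras for an accessible monad on the locally presentable category of $\cS$-enriched graphs (families of objects of $\cS$ indexed by pairs drawn from a set), so one may invoke the general results of Kelly--Lack that $\mathcal{V}\text{-}Cat$ is locally presentable whenever $\mathcal{V}$ is.

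Next I would quote Tabuada's theorem \cite[5.10]{TabS}, which both produces the DK-model structure on $Cat_{\cS}$ and exhibits it as cofibrantly generated, with explicit small generating sets: the generating cofibrations are built by applying the maps in $I_{\cS}$ to the hom-spectra of the two-object spectral categories assembled from $\cS$, together with the map $\emptyset \to \mathbb{S}$ that freely adjoins a single object; the generating acyclic cofibrations arise analogously from $J_{\cS}$ together with the ``interval'' spectral categories that encode the DK-equivalences. Combining this cofibrant generation with the local presentability established in the previous step shows that $Cat_{\cS}$ with the DK-model structure is combinatorial, which is the first assertion of the proposition.

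For the ``moreover'' I would appeal to the general rectification theorem for combinatorial model categories: by a theorem of Dugger (see also Lurie, \emph{Higher Topos Theory}, \S A.3.7), every combinatorial model category is connected by a finite chain of Quillen equivalences to a combinatorial \emph{simplicial} model category. Applying this to $Cat_{\cS}$ yields the desired Quillen equivalent simplicial model, and since a zig-zag of Quillen equivalences presents the same underlying $\oo$-category, this replacement is harmless for the comparison theorems that follow.

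The main obstacle here is not in any of the formal steps above but is already absorbed into the cited input \cite[5.10]{TabS}: the genuine content is that the candidate generating acyclic cofibrations detect exactly the DK-equivalences and that the small object argument runs in $Cat_{\cS}$ — in particular that transfinite compositions of pushouts of maps of spectral categories are well behaved on hom-spectra. Granting Tabuada's result, the only point requiring any care on our side is the local presentability of $Cat_{\cS}$, where one must check that restricting attention to small object sets does not disrupt the presentability bookkeeping; this is routine once $Cat_{\cS}$ is modeled as algebras over the locally presentable base $\cS$.
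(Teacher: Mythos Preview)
Your argument is correct and is essentially the standard one: local presentability of $Cat_{\cS}$ via the Kelly--Lack description of $\mathcal{V}$-$Cat$ as algebras for an accessible monad, cofibrant generation imported from \cite[5.10]{TabS}, and Dugger's rectification theorem for the simplicial replacement. There is nothing to object to in the logic.

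The only thing to flag is that the paper does not actually give its own proof of this proposition: it simply records the statement and attributes it to \cite[2.2.4]{BGT2}. So there is no ``paper's proof'' to compare against in any substantive sense; your outline is in effect reconstructing the argument behind the citation. If anything, your write-up is more explicit than what the paper provides, since you spell out the two ingredients (local presentability and cofibrant generation) that combine to give combinatoriality, and you name Dugger's theorem for the ``moreover'' clause rather than leaving it implicit. This is fine and arguably an improvement for a self-contained exposition, but be aware that in the paper's own economy the entire proposition is treated as a black-box import.
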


\begin{rem} As with dg categories, there are similar notions of a pretriangulated spectral category and a Morita equivalence of spectral categories.  We can use the machinery of Bousfield localization in this setting to obtain a combinatorial model category structure on $Cat_{\cS}$ where the weak equivalences are the Morita equivalences. 
\end{rem}

Since this model structure is the one we will use in our theorem, we record here as a proposition.

\begin{prop} The category $Cat_{\cS}$ with the Morita model structure is a combinatorial model category. 
\end{prop} 

%%%%%%%%%%%%%%%%%%%%%%%
\subsection{Review of $R$-Module Spectra}\mbox{}\\
In this section we set up the relevant properties of the model category of $R$-module spectra used in our main theorem. Let $R$ be an $E_{\oo}$-ring spectrum and $R \dash \Mod$ the category of $R$-module spectra. 

\begin{prop}
There is a cofibrantly generated symmetric monoidal model category structure on $R \dash \Mod$  by \cite{HSS}.
\end{prop}

\begin{rem}
More explicitly, start with the stable model category on symmetric spectra \cite[6.3]{HSS}. Applying the machinery of \cite{SS1}, one then constructs a model category structure on modules over $R$. Since the former is cofibrantly generated, so is the latter. The generating cofibrations and generating acyclic cofibrations for symmetric spectra are given by the sets $FI_{\partial}$ and $K \cup FI_{\Lambda}$ respectively.  Here, 
$FI_{\partial}$ is the set of maps
$$F_n\partial \Delta [m]^{+} \to F_n \Delta [m]^{+} ,$$
$K$ is the set of pushouts products
$$\Delta [m]^{+} \wedge F_{n+1} S^1 \cup_{\partial \Delta [m]^{+} \wedge F_{n+1}S^1} \partial  \Delta [m]^{+} \wedge Z(\lambda_n) \to  \Delta [m]^{+} \wedge Z(\lambda_n),$$
and $FI_{\Lambda}$ is the set of horn inclusions 
$$F_n \Lambda^k [m]^{+} \to F_n \Delta [m]^{+}.$$  
The functor $F_n$ is the left adjoint of the evaluation functor $Ev_n$ and $Z(\lambda_n)$ is the mapping cylinder of the natural map $F_{n+1}S^1 \to F_nS^0$. The generating cofibrations for $R \dash \Mod$ are given by the set $R \wedge FI_{\partial}$ and the generating acyclic cofibrations are given by $R \wedge (K \cup FI_{\Lambda})$.  For a more detailed description, please see \cite[3.4]{HSS}.
\end{rem}

Much of the structure of the model category $R \dash \Mod$ is controlled by the unit object $R \in R \dash \Mod$, and it is convenient to describe how $R$-modules can be built from the unit object $R$ via operations such as taking various colimits and retracts thereof.
\begin{defn} An $R$-module $M$ is perfect if it lies in the smallest stable subcategory of $R \dash \Mod$ containing $R$ and closed under finite homotopy colimits and retracts.
\end{defn}

We will use the notion of cell $R$-modules as a way to gain control over the cofibrant objects in $R \dash \Mod$.
\begin{defn} \label{cell1} An $R$-module $M$ is cell if $M$ is the union of an expanding sequence of sub $R$-modules $M_l$ such that $M_0 = *$ and $M_{j+1}$ is the homotopy cofiber of a map $\phi_j: F_j \to M_j$, where $F_j$ is a wedge of shifts of $R$ as a module over itself.  If a cell $R$-module is additionally perfect, we will refer to it as a perfect cell $R$-module.
\end{defn}

\begin{nota} We denote the category of cell $R$-modules by $R \dash \Mod^{\cell}$ and the category of perfect cell $R$-modules by $\Perf(R)^{\cell}$.  Since the smash product over $R$ of two cell $R$-modules is a cell $R$-module, $R \dash \Mod^{\cell}$ and $\Perf(R)^{\cell}$ are symmetric monoidal spectral categories with unit $R$.
\end{nota}

\begin{rem} The definition of cell $R$-modules given above agrees with the usual definition of cell objects using the model structure in the stable model category of symmetric spectra mentioned in the previous remark.  
\end{rem}

We will also need a crucial property of cell $R$-modules.
\begin{prop}
Every $R$-module is functorially weakly equivalent to a cell $R$-module. Moreover, every perfect $R$-module is functorially weakly equivalent to a perfect cell $R$-module.
\end{prop}

\begin{proof} Retracts of cell objects are the cofibrant objects in any cofibrantly-generated model category. Moreover, the standard use of the small object argument can be used to make this choice functorial.
\end{proof}

We conclude this section with the following.
\begin{prop} The category $\Perf(R)^{\cell}$ is generated by $R$ under finite colimits and tensors with finite spectra.
\end{prop}
\begin{proof} Suspensions and desuspensions of $R$ are obtained by tensoring $R$ with a shifted sphere spectrum $\SS[n] := \Sigma^{n} \SS$. That is,
$R[n] \simeq R \wedge \SS[n]$. By the definition above, a perfect cell $R$-module $M$ can be expressed as a finite colimit over a diagram consisting of objects of the form $R[n]$. The result follows.
\end{proof}

\subsection{Comparing Categories Enriched in $R \dash \Mod$ and $\Perf(R)^{\cell}$-Module Categories} \label{enrichedaremodules}
In this section, we will compare small categories enriched in $R \dash \Mod$ and module categories over the symmetric monoidal category $\Perf(R)^{\cell}$.
Using the symmetric monoidal model category structure on $R \dash \Mod$ we can define the model category of categories enriched in $R \dash \Mod$ mimicking the definition of the model structure on categories enriched in spectra.
 
\begin{defn} Let $Cat_{R \dash \Mod}$ denote the combinatorial model category of categories whose morphism spaces are enriched in $R \dash \Mod$.  
\end{defn}

There is a monoidal structure on $Cat_{R \dash \Mod}$ given by the pointwise smash product.  

\begin{defn} Given $\cC,\cD \in Cat_{R \dash \Mod}$. The pointwise smash product $\cC \wedge \cD$ is specified by
\begin{itemize}
\item The objects of $\cC \wedge \cD$, $Ob( \cC \wedge \cD )$, are given by pairs of objects $(c,d)$, where $c \in \cC$ and $d \in \cD$.
\item  For every pair of objects $(c,d) , (c',d') \in \cC \wedge \cD$, the morphism spectrum $\cC \wedge \cD((c,d),(c',d') := \cC(c,c') \wedge_R \cD(d,d')$.
\end{itemize}
\end{defn}

\begin{defn} A module category over the symmetric monoidal category $\Perf(R)^{\cell}$ is a spectral category $\cM$ with a spectral bifunctor $$\alpha: \Perf(R)^{\cell} \wedge \cM \to \cM$$
preserving finite colimits and tensors of objects with finite spectra in each variable, and with 
\begin{itemize}
\item Functorial associativity isomorphisms $m_{x,y,m}: \alpha(x \wedge_R y, m) \to \alpha(x,  \alpha(y,m))$.
\item Unit isomorphisms $l_{m}: \alpha(R, m) \to m$.
\end{itemize}
for any $x,y \in \Perf(R)^{\cell}$ and $m \in \cM$.
The isomorphisms above are subject to the usual pentagon and triangle axioms. 
\end{defn}

\begin{defn} A functor $\cF:\cM \to \cM'$ between two module categories over $\Perf(R)^{\cell}$ is a spectral functor equipped with 
functorial isomorphisms $$c_{x,M}: \cF(\alpha(x, m)) \to \alpha(x, \cF(m))$$
for any $x \in \Perf(R)^{\cell}$ and $m \in \cM$. These isomorphisms are subject to the usual pentagon and triangle axioms.
\end{defn}

We can now define
\begin{defn}
Let $\Mod_{\Perf(R)^{\cell}}(Cat_{\cS})$ be the category of module categories over the category $\Perf(R)^{\cell}$. A morphism in 
$\Mod_{\Perf(R)^{\cell}}(Cat_{\cS})$ is defined to be a Morita equivalence if it is a Morita equivalence of underlying spectral categories. 
\end{defn}

\begin{nota}  We now consider subcategories of $Cat_{R \dash \Mod}$ and $\Mod_{\Perf(R)^{\cell}}(Cat_{\cS})$ consisting of categories closed under all finite colimits and tensors of objects with finite spectra.
\begin{itemize} 

\item Let $Cat^{\fincol}_{R \dash \Mod}$ be the category with objects categories enriched in $R$-modules closed under all finite colimits and tensors with finite spectral. Morphisms are functors of categories enriched in $R$-modules which preserve finite colimits and tensors of objects with finite spectra.

\item Let $\Mod_{\Perf(R)^{\cell}}(Cat^{\fincol}_{\cS})$ be the category with objects module categories over $\Perf(R)^{\cell}$ closed under all finite colimits and tensors with finite spectra. Morphisms are functors of module categories over $\Perf(R)^{\cell}$ which preserve finite colimits and tensors of objects with finite spectra.

\end{itemize}
\end{nota}
 
We have the following:
%%%%%%%%%%%%%%%%%%%%%%%%%%%%%%%%%%%%%%%%%%%%
\begin{prop} \label{comparison}
There is an equivalence of categories
$$Cat^{\fincol}_{R \dash \Mod} \simeq \Mod_{\Perf(R)^{\cell}}(Cat^{\fincol}_{\cS})$$
 Moreover, this equivalence is the identity on underlying spectral categories.
\end{prop}
\begin{proof}
Let $\cC$ be a category in $\Mod_{\Perf(R)^{\cell}}(Cat^{\fincol}_{\cS})$, then $\cC$ is equipped with a bifunctor $\alpha: \Perf(R)^{\cell} \wedge \cC \to \cC$, that preserves finite colimits in each variable and has isomorphisms $\alpha(R,x) \simeq x$ for all objects $x \in \cC$.  We see that $\cC$ is thus equipped with maps $Hom_R(R,R) \wedge \cC(x,y) \to \cC(x,y)$, which is equivalent to $R \wedge \cC(x,y) \to \cC(x,y)$, so each mapping space in $\cC$ is indeed an $R$-module. Thus, $\cC \in Cat^{\fincol}_{R \dash \Mod}$.  Moreover, given a morphism $\cG: \cC \to \cD$ in $\Mod_{\Perf(R)^{\cell}}(Cat^{\fincol}_{\cS})$ restriction to $*/R$ gives $\cG$ the structure of a morphism in $Cat^{\fincol}_{R \dash \Mod}$. 

Thus, this construction defines a functor 
$$\Res: \Mod_{\Perf(R)^{\cell}}(Cat^{\fincol}_{\cS}) \to Cat^{\fincol}_{R \dash \Mod}.$$

Now, let $\cC' \in Cat^{\fincol}_{R \dash \Mod}$ be closed under finite colimits and tensors of objects with finite spectra. We wish to produce a functor $\alpha: \Perf(R)^{\cell} \wedge \cC' \to \cC'$. The data of the action of $R$ on each mapping spectrum $\cC'(x,y)$ is equivalent to the data of an action map $act: */R \wedge \cC' \to \cC'$, where $*/R$ is the spectral category with one object $*$ and $\Hom_{*/R}(*,*)=R$.  That is, $\cC'$ is a module category over $*/R$.

Starting with the action $act: */R \wedge \cC' \to \cC'$ and the full and faithful inclusion $i: */R \wedge \cC' \to \Perf(R)^{\cell} \wedge \cC'$, let 
$Lan_i(act): \Perf(R)^{\cell} \wedge \cC' \to \cC'$ be the spectral enriched left Kan extension of $act$ by $i$ \cite{Du}. This enriched left Kan extension exists because $\cC'$ closed under finite colimits and tensors of objects with finite spectra. Thus, we have the following commutative diagram satisfying the universal property of left Kan extensions:
\[\xymatrix{
\ast/R \wedge \cC' \ar[r]^-{act} \ar[d]_-{i} & \cC' \\
\Perf(R)^{\cell} \wedge \cC' \ar[ur]_-{Lan_i(act)}.
}\]

We claim the functor $Lan_i(act)$ is the desired functor $\alpha: \Perf(R)^{\cell} \wedge \cC' \to \cC'$.  The condition that the bifunctor $\alpha$ preserves colimits and tensors of objects by finite spectra in each variable follows from the definition of an enriched left Kan extension. That $\alpha$ is associative, i.e that $$\alpha(M \wedge_R N,x) \simeq \alpha(M,\alpha(N,x))$$ for all $M,N \in \Perf(R)^{\cell}$ and $x \in \cC'$ also follows from the definitions. Namely, write $M$ and $N$ as finite colimits $M \simeq \underset{i}{\mbox{colim}}R[n_i]$ and 
$N \simeq \underset{j}{\mbox{colim}}R[n_j]$.  Then we have 
\begin{align*}
\alpha(M \wedge_R N,x) &\simeq \alpha((\underset{i}{\mbox{colim}}R[n_i]) \wedge_R (\underset{j}{\mbox{colim}}R[n_j]),x) \simeq \alpha(\underset{i,j}{\mbox{colim}}(R[n_i] \wedge_R R[n_j]),x) \\
&\simeq  \underset{i,j}{\mbox{colim}}( \alpha(R[n_i+n_j],x)) \simeq  \underset{i,j}{\mbox{colim}}( act(R,x)[n_i+n_j])  \\ 
& \simeq \underset{i}{\mbox{colim}} \thinspace \underset{j}{\mbox{colim}} \thinspace \alpha(R[n_i], \alpha(R[n_j],x) \\
&\simeq \underset{i}{\mbox{colim}} \thinspace \alpha(R[n_i],  \underset{j}{\mbox{colim}} \thinspace \alpha(R[n_j],x)) \\
&\simeq  \alpha(\underset{i}{\mbox{colim}} \thinspace R[n_i],  \alpha(\underset{j}{\mbox{colim}}\thinspace R[n_j],x)) \simeq \alpha(M, \alpha(N,x)).
\end{align*}

Now let $\cC',\cD' \in Cat^{\fincol}_{R \dash \Mod}$ have all finite colimits and be closed under tensors with finite spectra. Let $\cF:\cC' \to \cD'$ be a functor in $Cat^{\fincol}_{R \dash \Mod}$ which preserves finite colimits and tensors of objects by finite spectra. We can use the left Kan extension above to construct $\alpha : \Perf(R)^{\cell} \wedge \cC' \to \cC'$ and $\beta: \Perf(R)^{\cell} \wedge \cD' \to \cD'$: structures of a module category over $\Perf(R)^{\cell}$. We wish to show that $\cF$ also extends to a functor in $\Mod_{\Perf(R)^{\cell}}(Cat_{\cS})$. Re-expressing the functor $\cF$ as a diagram
\[\xymatrix{
\ast/R \wedge \cC'  \ar[d]^{\Id \wedge \cF} \ar[r] &\cC' \ar[d]^{\cF} \\
\ast/R \wedge \cD' \ar[r] & \cD'.
}\]
We wish show this extends to the following diagram: 
\[\xymatrix{
\Perf(R)^{\cell} \wedge \cC'  \ar[d]^{\Id \wedge \cF} \ar[r]^-{\alpha} &\cC' \ar[d]^{\cF} \\
\Perf(R)^{\cell} \wedge \cD' \ar[r]^-{\beta} & \cD'.
}\]
This also follows from the definition of the left Kan extension. Namely, let $x \in \cC'$ and let $M \in \Perf(R)^{\cell}$ so that $M \simeq \underset{i}{\mbox{colim}}R[n_i]$, where this colimit is finite.
Then we have
\begin{align*}
\cF(\alpha(M,x)) &\simeq \cF(\alpha(\underset{i}{\mbox{colim}}R[n_i],x)) \\
&\simeq \cF(\underset{i}{\mbox{colim}} \alpha(R[n_i],x)) \\
&\simeq \underset{i}{\mbox{colim}} \cF(\alpha(R[n_i],x)) \\
&\simeq \underset{i}{\mbox{colim}} \beta(R[n_i],\cF(x)) \\
&\simeq \beta(\underset{i}{\mbox{colim}}R[n_i],\cF(x)) \\
&\simeq \beta(M,\cF(x)) 
\end{align*}
as desired.

Thus, this construction of left Kan extension defines a functor 
$$\LKan: Cat^{\fincol}_{R \dash \Mod} \to \Mod_{\Perf(R)^{\cell}}(Cat^{\fincol}_{\cS}).$$

We now wish to show that $\Res$ and $\LKan$ are inverse equivalences. It is easy to see there is an equivalence $\Id \simeq \Res \circ \LKan$.  Thus, it suffices to 
show there is an equivalence $$\LKan \circ \Res \simeq \Id.$$

If we start with $\cC'$ a module category of $\Perf(R)^{\cell}$ closed under finite colimits and tensors of objects by finite spectra
$$\beta: \Perf(R)^{\cell} \wedge \cC' \to \cC',$$ restrict to the action on each hom spectra $act_{\beta}: R \wedge \cC'(x,y) \to \cC'(x,y)$, then we will show the induced natural transformation $Lan_i(act_{\beta}) \to \beta$ is a natural isomorphism.  

Let $M \in \Perf(R)^{\cell}$ so that $M \simeq \underset{i}{\mbox{colim}}R[n_i]$, where this colimit is finite. Then, for $x \in \cC'$ the natural transformation
\[\xymatrix{ \ar @{} [dr] |{\Updownarrow}
Lan_i(act_{\beta})(M,x) \ar[r] & \beta(M,x) \\
Lan_i(act_{\beta})( \underset{i}{\mbox{colim}}R[n_i],x) \ar[r]& \beta( \underset{i}{\mbox{colim}}R[n_i],x)
}\]

is equivalent to
$$\underset{i}{\mbox{colim}} \thinspace act_{\beta}(R,x)[n_i] \to\underset{i}{\mbox{colim}} \beta(R,x)[n_i].$$
However, this is an isomorphism by definition.  

Thus, we have our desired equivalence $$Cat^{\fincol}_{R \dash \Mod} \simeq \Mod_{\Perf(R)^{\cell}}(Cat^{\fincol}_{\cS}).$$
\end{proof}

\begin{rem} 
Let us also describe how the enriched left Kan extension acts on morphism spectra.
Let $\cC' \in Cat^{\fincol}_{R \dash \Mod}$ and let $x_1,x_2 \in \cC'$. We start with an action map
$$act: R \wedge \Hom_{\cC'}(x_1,x_2) \to \Hom_{\cC'}(x_1,x_2).$$

Now let $M \simeq \underset{i}{\mbox{colim}}R[n_i]$ be a perfect cell $R$-module (i.e the colimit is finite). The enriched left Kan extension determines a map 
\[\xymatrix{ \ar @{} [dr] |{\Updownarrow} 
\Hom_{\Perf(R)^{\cell} \wedge \cC'} ((R,x_1),(M,x_2)) \ar[r]& \Hom_{\cC'}(\alpha(R,x_1),\alpha(M,x_2)) \\
\ar @{} [dr] |{\Updownarrow} \Hom_{\Perf(R)^{\cell}}(R,\underset{i}{\mbox{colim}}R[n_i]) \wedge \Hom_{\cC'}(x_1,x_2) \ar[r]& \Hom_{\cC'}(\alpha(R,x_1),\underset{i}{\mbox{colim}}\alpha(R[n_i],x_2)) \\
\underset{i}{\mbox{colim}}R[n_i] \wedge \Hom_{\cC'}(x_1,x_2) \ar[r]& \Hom_{\cC'}(x_1,\underset{i}{\mbox{colim}} \thinspace x_2[n_i]).
}\]
\end{rem}

We will return to the comparison between $Cat_{R \dash \Mod}$ and $\Mod_{\Perf(R)^{\cell}}(Cat_{\cS})$ after reviewing how to pass from model categories to $\oo$-categories. We will obtain an equivalence between the underlying $\oo$-categories of $Cat_{R \dash \Mod}$ and $\Mod_{\Perf(R)^{\cell}}(Cat_{\cS})$ after localizing by Morita equivalences.

%%%%%%%%%%%%%%%%%%%%%%%%%%%%%%%%%
\subsection{The Enriched Dold-Kan Correspondence}\mbox{}\\
In \cite{SS2}, Schwede-Shipley generalize the Dold-Kan correspondence and prove that the category of modules over the Eilenberg-MacLane symmetric spectrum $Hk$, $Hk \dash \Mod$, is Quillen equivalent to chain complexes of $k$-modules, $Ch(k)$. In \cite{Tab1}, Tabuada further generalizes this result and establishes a Quillen equivalence between dg categories over $k$, $Cat_{dg}^k$, and categories enriched in $Hk$-module symmetric spectra, $Cat_{Hk \dash \Mod}$, where both categories have the DK-model structure:

\begin{prop} There is a Quillen equivance
$$Cat_{dg}^k \simeq Cat_{Hk \dash \Mod}$$
where both categories are endowed with the DK-model structure.
\end{prop}

We will exploit this equivalence between dg categories and categories enriched in $Hk$-module spectra in our comparison theorem by using machinery that was developed for spectral categories in \cite{BGT1} and \cite{BGT2}.  

Moreover, one can conclude that:
\begin{prop} \label{pretridgsc}
A dg category $\cT$ is pretriangulated if and only if its associated spectral category is pretriangulated. 
Similarly, two dg categories are Morita equivalent if and only if their associated spectral categories are Morita equivalent.
\end{prop}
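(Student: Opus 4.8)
The plan is to reduce both statements to two structural properties of the Dold--Kan equivalence used to pass between $Cat^k_{dg}$ and $Cat_{Hk \dash Mod}$. Recall (Schwede--Shipley \cite{SS2}, Tabuada \cite{Tab1}) that $Ch(k)$ and $Hk \dash Mod$ are connected by a zig-zag of weak monoidal Quillen equivalences of stable monoidal model categories, and that the associated spectral category $\cT^{sp}$ of a dg category $\cT$ is obtained by applying this equivalence hom-object-wise. I would first isolate the two facts the argument needs. First, the induced Quillen equivalence between the module categories $\hat{\cT} = Fun_{dg}(\cT^{op},Ch(k))$ and the category $\widehat{\cT^{sp}}$ of $\cT^{sp}$-module spectra yields an \emph{exact} equivalence of the triangulated homotopy categories $\cD(\cT) \simeq Ho(\widehat{\cT^{sp}})$; this is automatic, since both model categories are stable and the functor comes from a Quillen pair, so its derived functor commutes with shifts and preserves (co)fiber sequences. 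Second, this equivalence carries the representable module $\cT(-,x)$ to a module weakly equivalent to the representable $\cT^{sp}(-,x)$ --- equivalently, the hom-object-wise change of enrichment intertwines the two enriched Yoneda embeddings, which follows from compatibility of the Dold--Kan functor with units and composition up to coherent weak equivalence. Both facts are natural in $\cT$, and in particular the representable-matching also holds for the inverse equivalence. Since the objects of $\cT^{sp}$ are those of $\cT$, representability is a condition that can be compared directly on the two sides.

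Granting this, the pretriangulated statement is a transport of the three conditions of Definition~\ref{pretri1} across the equivalence, in both directions. For condition (1): an exact equivalence preserves zero objects and the property of being homotopically trivial, and by the second fact it matches $\cT(-,0)$ with $\cT^{sp}(-,0)$, so one is homotopically trivial iff the other is. For condition (2): suspension of a module is a pointwise shift on the dg side and a pointwise suspension on the spectral side, both modeling the shift of the triangulated homotopy category, and the equivalence commutes with these up to natural isomorphism; thus if $\Sigma M$ is equivalent to a representable on one side, the image of $M$ desuspends a representable on the other, hence is itself representable by condition (2) there, and applying the inverse equivalence (again matching representables) shows $M$ is representable. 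Condition (3) is similar, using that an exact equivalence preserves homotopy cofiber sequences and representables. Combining the three directions gives that $\cT$ is pretriangulated iff $\cT^{sp}$ is.

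For the Morita statement, recall that $\cF \colon \cT \to \cT'$ is a Morita equivalence iff the induced dg functor $\hat{\cF}_{perf}\colon \hat{\cT}_{perf} \to \hat{\cT'}_{perf}$ is a DK-equivalence, and likewise for spectral categories. The exact, representable-preserving equivalence above sends finite cell $\cT$-modules to finite cell $\cT^{sp}$-modules (representables to representables, shifts to shifts, homotopy cofibers to homotopy cofibers) and retracts to retracts, so it restricts to a DK-equivalence between the spectral category associated to $\hat{\cT}_{perf}$ and $\widehat{\cT^{sp}}_{perf}$; here one uses that spectralization is hom-object-wise, hence compatible with passing to full subcategories. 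These restrictions are natural in $\cT$, so for a dg functor $\cF$ they fit the spectralization of $\hat{\cF}_{perf}$ and the functor $\widehat{\cF^{sp}}_{perf}$ into a square commuting up to DK-equivalence whose vertical maps are DK-equivalences; hence one is a DK-equivalence iff the other is. Finally, since spectralization $Cat^k_{dg} \to Cat_{Hk \dash Mod}$ is itself a Quillen equivalence, its derived functor detects and reflects DK-equivalences, so $\hat{\cF}_{perf}$ is a DK-equivalence iff its spectralization is. Chaining: $\cF$ is Morita iff $\hat{\cF}_{perf}$ is a DK-equivalence iff its spectralization is iff $\widehat{\cF^{sp}}_{perf}$ is iff $\cF^{sp}$ is Morita.

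The main obstacle is the second structural fact: identifying, compatibly with the monoidal and enriched structures, the image of the Yoneda module $\cT(-,x)$ under the change of enrichment with the representable $\cT^{sp}(-,x)$. This is not deep but requires carefully unwinding the weak monoidal structure of the Dold--Kan zig-zag --- the comparison maps between the monoidal products must be weak equivalences, so that hom-object-wise application genuinely produces a model for $\cT^{sp}$ and intertwines the enriched Yoneda embeddings up to coherent weak equivalence. I would either extract this from Tabuada's construction in \cite{Tab1} directly or appeal to the parallel bookkeeping in \cite{BGT2}; once it is in hand, every remaining step is a formal manipulation with triangulated equivalences.
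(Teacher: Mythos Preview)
The paper gives no proof of this proposition at all: it is stated immediately after recalling Tabuada's Quillen equivalence $Cat^k_{dg}\simeq Cat_{Hk\dash Mod}$ with only the phrase ``Moreover, one can conclude that,'' treating it as an evident consequence. Your proposal therefore supplies exactly the argument the paper omits, and it is correct in outline. You correctly isolate the two ingredients that make the deduction work---(i) the induced Quillen equivalence of module categories is exact on homotopy categories, and (ii) it carries representables to representables---and you correctly identify (ii) as the only place where genuine care is needed, namely in tracking the weak monoidal compatibility of the Dold--Kan zig-zag. Once (i) and (ii) are in hand, your transport of the three clauses of Definition~\ref{pretri1} and of the $(\widehat{-})_{perf}$ construction is routine.

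One small remark: in your Morita argument you pass through ``the spectralization of $\hat{\cF}_{perf}$'' and then invoke that the derived spectralization functor on $Cat^k_{dg}$ reflects DK-equivalences. This is fine, but slightly roundabout; once you know that the equivalence $\cD(\cT)\simeq Ho(\widehat{\cT^{sp}})$ is natural in $\cT$ and matches representables, it already restricts to a natural DK-equivalence $\hat{\cT}_{perf}\simeq \widehat{\cT^{sp}}_{perf}$, and the two-out-of-three property for DK-equivalences in the resulting commuting square finishes the argument directly, without a separate appeal to reflection of DK-equivalences by the change-of-enrichment functor.
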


\begin{cor} There is a Quillen equivance
$$Cat_{dg}^k \simeq Cat_{Hk \dash \Mod}$$
where both categories are endowed with the Morita model structure.
\end{cor}

%%%%%%%%%%%%%%%%%%%%%%%%%%
\section{Review of $\oo$-Categories} \label{reviewofoocats}\mbox{}\\
We now work with the theory of quasicategories, a well-developed model of $\oo$-categories. These first appeared in the work of Boardman and Vogt, where they were referred to as weak Kan complexes. The theory was subsequently developed by Joyal and then extensively studied by Lurie. In this section we give a brief review of the relevant background regarding the theory of $\oo$-categories. Our basic references for this material are Jacob Lurie's books \cite{HA},\cite{T}.
An extremely brief introduction to the definition of an $\oo$-category is given in \cite{L3}.

\subsection{From Model Categories to $\oo$-Categories} \label{modeloo}\mbox{}\\
There are a number of options for producing the ``underlying" $\oo$-category of a category equipped with a notion of ``weak equivalence." The most structured setting is that of a simplicial model category $C$, where the $\oo$-category can be obtained by restricting to the full simplicial subcategory $C^{cf}$ of cofibrant-fibrant objects and then applying the simplicial nerve functor $N$. More generally, if $C$ is a category equipped with a subcategory of weak equivalences $wC$ , the Dwyer-Kan simplicial localization $LC$ provides a corresponding simplicial category, and then $N((LC)^f)$, where $(-)^f$ denotes fibrant replacement in simplicial categories, yields an associated $\oo$-category. Lurie has given a version of this approach in \cite[1.3.4]{HA}: we associate to a (not necessarily simplicial) category $C$ with weak equivalences $W$ an $\oo$-category $N(C)[W^{-1}]$. Here, the notation $N(C)[W^{-1}]$ refers to the universal $\oo$-category equipped with a map $N(C) \to N(C)[W^{-1}]$ such that for another $\oo$-category $D$, the functor induced by precomposition 
$$\Fun(N(C)[W^{-1}],D) \to \Fun(N(C),D)$$ 
is a fully faithful embedding whose essential image is the collection of functors from $C$ to $D$ that map the image of morphisms in $W$ to equivalences in $D$ \cite[1.3.4.1]{HA}. If $C$ is a model category, it is usually convenient to restrict to the cofibrant objects $C^c$ and consider $N(C^c)[W^{-1}]$.

All of these constructions produce equivalent $\oo$-categories if $C$ is a model category \cite[1.3.4]{HA}. We will refer to this construction as the underlying $\oo$-category of a model category.

Furthermore, all of these constructions are functorial. In the sequel, we will need that given a Quillen adjunction $(F,G)$, there is an induced adjunction of functors on the level of the associated $\oo$-categories \cite[5.2.4.6]{T}. Moreover, a Quillen equivalence will induce an equivalence of $\oo$-categories. 

We will often be interested in $\oo$-categories for which we have set theoretic control.  Lurie provides a thorough treatment of the theory of presentable and accessible $\oo$-categories in \cite[5.4]{T} and \cite[5.5]{T}.  Briefly, an $\oo$-category $\cA$ is accessible if it is locally small and has a good supply of filtered colimits and compact objects. A great source of examples of accessible $\oo$-categories are Ind-categories (defined in the sequel) of small $\oo$-categories. An $\oo$-category $\cA$ is presentable if it is accessible and furthermore admits all small colimits.

\begin{nota} Let $\cP r^{L}$ denote the $\oo$-category of presentable $\oo$-categories and colimit-preserving functors; the $\oo$-category of colimit preserving functors is denoted $\Fun^{L}(-,-)$. In fact, $\Fun^{L}(-,-)$ is a presentable $\oo$-category and provides an internal hom object for $\cP r^{L}$.
\end{nota}

We will also need \cite[1.3.4.22]{HA}:
\begin{prop} Let C be a combinatorial model category, then the underlying $\oo$-category of C is a presentable $\oo$-category.
\end{prop}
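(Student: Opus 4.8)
The plan is to reduce, via a presentation of the combinatorial model category, to the case of simplicial presheaf categories and their left Bousfield localizations, for which presentability of the underlying $\oo$-category is transparent, and then transport the conclusion along a Quillen equivalence. First I would invoke Dugger's presentation theorem: every combinatorial model category $C$ is Quillen equivalent to a left Bousfield localization $L_S\big(\mathrm{Fun}(D^{op},sSet)_{proj}\big)$ of the projective model structure on simplicial presheaves over some small category $D$, localized at a small set $S$ of morphisms. Since a Quillen equivalence induces an equivalence of underlying $\oo$-categories (as recalled in \autoref{modeloo}), and equivalences preserve presentability, it suffices to establish the claim for $L_S\big(\mathrm{Fun}(D^{op},sSet)_{proj}\big)$.

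For the unlocalized presheaf category, the underlying $\oo$-category of $(sSet,\text{Kan--Quillen})$ is the $\oo$-category of spaces $\cS$, which is presentable. Passing to presheaves preserves this: the underlying $\oo$-category of $\mathrm{Fun}(D^{op},sSet)_{proj}$ is $\mathrm{Fun}(N(D)^{op},\cS)=\mathcal{P}(N(D))$, the $\oo$-category of presheaves of spaces on the small $\oo$-category $N(D)$, which is presentable by \cite[5.5.3.1]{T}. The identification of the underlying $\oo$-category of a projective functor model structure with the corresponding functor $\oo$-category is the standard compatibility between model-categorical and $\oo$-categorical functor categories (it follows from straightening/unstraightening together with the fact that cofibrant-fibrant replacement in the projective structure can be performed objectwise).

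It remains to see that passing to the left Bousfield localization $L_S(-)$ corresponds, on underlying $\oo$-categories, to the accessible localization at the image of $S$. Since $L_S M$ has the same underlying category, the same cofibrations, and more weak equivalences than $M$, the identity induces a localization functor $N(M^c)[W_M^{-1}]\to N(M^c)[W_{L_S M}^{-1}]$; unwinding the definition of the localized model structure, its target is the full subcategory of $S$-local objects, i.e.\ it is the Bousfield localization of $\mathcal{P}(N(D))$ at $S$. An accessible (equivalently, set-generated) localization of a presentable $\oo$-category is again presentable by \cite[5.5.4.15]{T}. Combining the three steps yields the proposition.

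The main obstacle is the identification in the last paragraph of the model-categorical Bousfield localization with the $\oo$-categorical localization at $S$: one must match the fibrant objects of $L_S M$ (the $S$-local fibrant objects in the model-categorical sense) with the $S$-local objects of $\mathcal{P}(N(D))$ in Lurie's sense, and check that the weak equivalences of $L_S M$ are exactly the maps detected by mapping into this subcategory — equivalently, that the derived mapping spaces computed in $M$ and in the localized $\oo$-category agree on the relevant objects. Everything else is either a direct citation or a routine unwinding of definitions.
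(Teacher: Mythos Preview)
Your argument is correct and follows the standard route to this result: Dugger's presentation theorem reduces the question to a localized simplicial presheaf category, whose underlying $\infty$-category is an accessible localization of $\mathcal{P}(N(D))$ and hence presentable. The identification of model-categorical left Bousfield localization with $\infty$-categorical localization at $S$ that you flag as the main obstacle is indeed the only point requiring care, and it is handled in \cite[A.3.7.8]{T} together with \cite[1.3.4.20]{HA}; in any case your description of what must be checked is accurate.

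However, the paper does not prove this proposition at all: it simply records it as a citation of \cite[1.3.4.22]{HA}. So there is nothing to compare on the level of argument. What you have written is essentially a sketch of Lurie's own proof of that result in \cite{HA}, which proceeds exactly via Dugger's theorem and the identification of the underlying $\infty$-category of $L_S\,\mathrm{Fun}(D^{op},sSet)_{proj}$ with the accessible localization of the presheaf $\infty$-category. Your proposal is therefore more detailed than what the paper requires, but entirely consistent with the cited source.
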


We now use this proposition to show that the $\oo$-category $N(\Mod_{\Perf(R)^{\cell}}(Cat_{\cS}))[W'^{-1}]$  
is a presentable $\oo$-category, where $\Mod_{\Perf(R)^{\cell}}(Cat_{\cS})$ is introduced in Section \ref{enrichedaremodules}.

First, using the comparison \ref{comparison} we have the following proposition.
%%%%%%%%%%%%%%%%%%%%%%%%%%%%%%%%%%%%%%%%%%%%
\begin{prop} \label{newcomparison} Let $W$ be the class of Morita equivalences in $Cat_{R \dash \Mod}$ and let $W'$ be the class of Morita equivalences in $\Mod_{\Perf(R)^{\cell}}(Cat_{\cS})$.
There is an equivalence of underlying $\oo$-categories
$$N(Cat_{R \dash \Mod})[W^{-1}] \simeq N(\Mod_{\Perf(R)^{\cell}}(Cat_{\cS}))[W'^{-1}].$$
\end{prop}
\begin{proof}
In \ref{comparison}, we proved an equivalence of categories
$$Cat^{\fincol}_{R \dash \Mod} \simeq \Mod_{\Perf(R)^{\cell}}(Cat^{\fincol}_{\cS})$$
This equivalence induces an equivalence of $\oo$-categories
$$N(Cat^{\fincol}_{R \dash \Mod})[W^{-1}] \simeq N(\Mod_{\Perf(R)^{\cell}}(Cat^{\fincol}_{\cS}))[W'^{-1}].$$
The natural inclusion functor 
$$i:Cat^{\fincol}_{R \dash \Mod} \to Cat_{R \dash \Mod}$$
induces an equivalence of $\oo$-categories
$$N(Cat^{\fincol}_{R \dash \Mod})[W^{-1}] \simeq N(Cat_{R \dash \Mod})[W^{-1}]$$
by \cite[1.3.4.16]{HA} since $Cat_{R \dash \Mod}$ (with its Morita model structure) has a fibrant replacement functor $R:Cat_{R \dash \Mod} \to Cat_{R \dash \Mod}$ which takes values in $Cat^{\fincol}_{R \dash \Mod}$ by \ref{fibrant replacement} and \ref{fibrant replacement preserves finite colimits}. 

Similarly, the natural inclusion functor 
$$i:Cat^{\fincol}_{\cS} \to Cat_{\cS}$$
induces an equivalence of $\oo$-categories
$$N(\Mod_{\Perf(R)^{\cell}}(Cat^{\fincol}_{\cS}))[W'^{-1}]  \simeq N(\Mod_{\Perf(R)^{\cell}}(Cat_{\cS}))[W'^{-1}].$$

In summary, we have the following diagram of equivalences of $\oo$-categories:
\[\xymatrix{
N(Cat^{\fincol}_{R \dash \Mod})[W^{-1}] \ar[d]_-{\simeq} \ar[r]^-{\simeq} & N(\Mod_{\Perf(R)^{\cell}}(Cat^{\fincol}_{\cS}))[W'^{-1}] \ar[d]^-{\simeq} \\
N(Cat_{R \dash \Mod})[W^{-1}] & N(\Mod_{\Perf(R)^{\cell}}(Cat_{\cS}))[W'^{-1}].
}\]
The conclusion follows.
\end{proof}

\begin{cor} \label{presentable}The $\oo$-category $N(\Mod_{\Perf(R)^{\cell}}(Cat_{\cS}))[W'^{-1}]$ is presentable.
\end{cor}
\begin{proof} The category $Cat_{R \dash \Mod}$ with the Morita model structure is a combinatorial model category, hence $N(Cat_{R \dash \Mod})[W^{-1}]$ is presentable.
Now use the proposition above.
\end{proof}

%%%%%%%%%%%%
\subsection{Stable $\oo$-Categories and Idempotent-Complete $\oo$-Categories}
We now recall the definition of a stable $\oo$-category.  There is a close a connection between stable $\oo$-categories and spectral categories.  On the one hand, for every pair of objects in a stable $\oo$-category one can extract a mapping spectrum. On the other hand, given a category enriched in spectra, its category of right modules has a projective model structure and its associated $\oo$-category is stable.  

\begin{defn}
An $\oo$-category is stable \cite[1.1.1.9]{HA} if it has finite limits and colimits and pushout and pullback squares coincide \cite[1.1.3.4]{HA}. Let $Cat_{\oo}^{ex}$ denote the $\oo$-category of small stable $\oo$-categories and exact functors (i.e. functors which preserve finite limits and colimits) \cite[1.1.4]{HA}. The $\oo$-category of exact functors between $A$ and $B$ is denoted by $\Fun^{ex}(A, B)$.
\end{defn}

\begin{rem}
For a small stable $\oo$-category $\cC$, the homotopy category $Ho(\cC)$ is triangulated, with the exact triangles determined by the cofiber sequences in $\cC$ \cite[1.1.2.15]{HA}. This is why we use the Morita model structure on dg categories (and on $Cat_{Hk-Mod}$), i.e. fibrant objects in the Morita model structure on dg categories are pretriangulated.  
\end{rem}

Recall that an $\oo$-category $\cC$ is idempotent-complete if the image of $\cC$ under the Yoneda embedding $\cC \to \Fun(\cC, N(Set_{\Delta}^{cf}))$ is closed under retracts, where $N(Set_{\Delta}^{cf})$ is the $\oo$-category of spaces. 

\begin{nota} Let $Cat^{perf}_{\oo}$ denote the $\oo$-category of small idempotent-complete stable $\oo$-categories and exact functors. 
\end{nota}

There is an idempotent completion functor given as the left adjoint to the inclusion $Cat^{perf}_{\oo} \to Cat^{ex}_{\oo}$, which we denote by $\Idem(-)$.

\begin{defn} Let $\cA$ and $\cB$ be small stable $\oo$-categories. Then we will say that $\cA$ and $\cB$ are Morita equivalent if $\Idem(\cA)$ and $\Idem(\cB)$ are equivalent.
\end{defn}

It is shown in \cite[4.23]{BGT1} that:
\begin{thm} The $\oo$-category $Cat^{perf}_{\oo}$ is the underling $\oo$-category of the category $Cat_{\cS}$ endowed with the Morita model category structure. That is, the notion of Morita equivalence for spectral categories is compatible with the notion of Morita equivalence for stable $\oo$-categories.
\end{thm}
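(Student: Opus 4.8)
The plan is to follow the argument of \cite{BGT1}. The Morita model structure on $Cat_{\cS}$ is, by construction, a left Bousfield localization of the DK-model structure, and the underlying $\oo$-category of a left Bousfield localization is the localization of the underlying $\oo$-category at the newly inverted maps (an instance of the functoriality discussed in \ref{modeloo}). So it suffices to: (i) identify the underlying $\oo$-category $\cC_{DK}$ of the DK-model structure on $Cat_{\cS}$; (ii) produce a functor from $\cC_{DK}$ to $Cat^{perf}_{\oo}$ that carries Morita equivalences to equivalences; and (iii) show the induced functor on the Morita localization $\cC_{DK}[W'^{-1}]$ is an equivalence of $\oo$-categories.

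For step (ii) I would send a spectral category $\cA$ to $Perf(\cA) := (Mod_{\cA})^{\omega}$, the full subcategory of compact objects of the underlying $\oo$-category $Mod_{\cA}$ of the combinatorial model category of $\cA$-modules; this $Mod_{\cA}$ is stable, since the shift functors on module spectra are equivalences. The assignment is functorial: a spectral functor $\cA \to \cB$ induces a colimit-preserving functor $Mod_{\cA} \to Mod_{\cB}$ by (enriched) left Kan extension, hence an exact functor on compact objects. A DK-equivalence $\cA \to \cB$ induces a Quillen equivalence on module categories, hence an equivalence $Mod_{\cA} \simeq Mod_{\cB}$ and thus $Perf(\cA) \simeq Perf(\cB)$, so the assignment descends to a functor $\cC_{DK} \to Cat^{perf}_{\oo}$. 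Finally, $\cF \colon \cA \to \cB$ is by definition a Morita equivalence precisely when it induces a DK-equivalence on perfect-module categories, equivalently an equivalence $Perf(\cA) \simeq Perf(\cB)$; hence our functor inverts Morita equivalences and factors as a functor $\bar{\Phi} \colon \cC_{DK}[W'^{-1}] \to Cat^{perf}_{\oo}$.

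For step (iii), essential surjectivity of $\bar{\Phi}$ uses the $\oo$-categorical Schwede--Shipley/Morita recognition theorem: given a small idempotent-complete stable $\oo$-category $\cC$, the $\oo$-category $Ind(\cC)$ is compactly generated and stable, and a choice of a set of compact generators exhibits $Ind(\cC)$ as the modules over a spectrally enriched $\oo$-category $\cE$ (the full subcategory on the generators, with its spectral mapping objects). Rectifying $\cE$ by a strict spectral category $\cA$ --- this is where the identification of $\cC_{DK}$ in step (i) is used --- yields $Mod_{\cA} \simeq Ind(\cC)$, whence $Perf(\cA) \simeq (Ind(\cC))^{\omega} \simeq \cC$, the last equivalence because $\cC$ is idempotent-complete. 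For full faithfulness one compares mapping spaces: $\Map_{\cC_{DK}[W'^{-1}]}(\cA,\cB)$ is modeled by $(\cA,\cB)$-bimodules, equivalently by the maximal subgroupoid of $Fun^{L}(Mod_{\cA},Mod_{\cB})$, and passage to compact objects identifies colimit-preserving functors $Mod_{\cA} \to Mod_{\cB}$ with exact functors $Perf(\cA) \to Perf(\cB)$, which is exactly the mapping space in $Cat^{perf}_{\oo}$.

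The main obstacle is step (i) together with the rectification invoked in step (iii): one must know that the underlying $\oo$-category of the DK-model structure on \emph{strict} spectral categories is the $\oo$-category of spectrally enriched $\oo$-categories, i.e.\ that every such enriched $\oo$-category is presented by an honest spectral category. This is the content of the comparison results of \cite{TabS}, \cite{BGT1}, \cite{BGT2}. A secondary technical point is reconciling the two idempotent completions: the thick closure of finite-cell modules used to define Morita equivalence of spectral categories must be shown to agree with the subcategory of compact objects of $Mod_{\cA}$, which reduces to the standard fact that a module is compact if and only if it is a retract of a finite cell module.
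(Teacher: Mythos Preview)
The paper does not supply its own proof here: the theorem is simply quoted from \cite[4.23]{BGT1}. Your sketch is an outline of that cited argument, so at the level of strategy you are aligned with what the paper invokes.

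There is, however, a slip in your full faithfulness step. The identification ``colimit-preserving functors $Mod_{\cA} \to Mod_{\cB}$ $\simeq$ exact functors $Perf(\cA) \to Perf(\cB)$'' is false as stated: restriction along the Yoneda embedding and $Ind$-extension give $Fun^{L}(Mod_{\cA},Mod_{\cB}) \simeq Fun^{ex}(Perf(\cA),Mod_{\cB})$, which is strictly larger. To match the mapping spaces in $Cat^{perf}_{\oo}$ one must restrict to colimit-preserving functors that \emph{preserve compact objects}, equivalently to those $(\cA,\cB)$-bimodules $M$ for which $M(a,-)$ is perfect over $\cB$ for every object $a$ of $\cA$. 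Correspondingly, your description of the mapping space in the Morita localization by \emph{all} bimodules overshoots; the correct class is the right-perfect ones. With that restriction in place the rest of your outline is sound.
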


Thus, pretriangulated spectral categories can be interpreted as a rectified model of idempotent-complete small stable $\oo$-categories.

%%%%%%%%%%%%%%%%%%%%%%%%%%%%%%%%%%%%%%%%%%%%%%%%%%%%%%%%
\subsection{$\Ind$-Categories}
Our review of $\Ind$-categories is based on the material in \cite[4.1]{BFN} and \cite[2.4]{BGT1}.  We will use $\Ind$-categories to define the symmetric monoidal structure on $Cat^{perf}_{\oo}$ and to relate large and small $\oo$-categories.

Presentable $\oo$-categories are large $\oo$-categories that are generated under sufficiently large filtered colimits by some small $\oo$-category. To make this notion precise we need the notion of an $\Ind$-category. Given a small $\oo$-category $\cC$, we can form the $\oo$-category $\cP(\cC)$ of presheaves on $\cC$ valued in the $\oo$-category of spaces. This is the formal closure of $\cC$ under colimits. There is a fully faithful Yoneda embedding $\cC \to \cP(\cC)$ and $\cP(\cC)$ is generated by the image of $\cC$ under small colimits \cite[5.1.5.8]{T}. For any $\oo$-category $\cC$ and infinite regular cardinal $\kappa$, we can form

\begin{defn}
The $\Ind$-category
$\Ind_\kappa(\cC)$, which is the formal closure under $\kappa$-filtered colimits of $\cC$ \cite[5.3.5]{T}. 
\end{defn}

The $\oo$-category $\Ind_{\kappa}(\cC)$ is a full subcategory of 
$\cP(\cC)$, and the Yoneda embedding $\cC \to \cP(\cC)$ factors as
$\cC \to \Ind_{\kappa}(\cC) \to \cP(\cC)$.  

We record here the following useful properties of the construction of the $\Ind$-category.

\begin{prop}
Let $\cC$ be a small $\oo$-category and $\kappa$ an infinite regular cardinal.
\begin{itemize}
\item The $\oo$-category $\Ind_{\kappa}(\cC)$ admits all $\kappa$-small colimits that exist in $\cC$ \cite[5.3.5.14, 5.5.1.1]{T}. 
\item The functor $\cC \to \Ind_{\kappa}(\cC)$ preserves $\kappa$-filtered colimits \cite[5.3.5.2,5.3.5.3]{T}. 
\item  If $\cC$ is additionally a stable $\oo$-category, then $\Ind_{\kappa}(\cC)$ is a stable $\oo$-category \cite[1.1.3.6]{HA}.
\item The image of $\cC$ in $\Ind_{\kappa}(\cC)$ provides a set of compact objects which generates $\Ind(\cC)$ under $\kappa$-filtered colimits \cite[5.3.5.5,5.3.5.11]{T}.
\item The category $\Ind_{\kappa}(\cC)$ is characterized by the property that it has $\kappa$-small filtered colimits, admits a functor $\cC \to \Ind_{\kappa}(\cC)$, and this functor induces an equivalence
\[
\Fun_{\kappa}(\Ind(\cC), \cD) \to \Fun(\cC,\cD), 
\]
for any $\cD$ which admits $\kappa$-filtered colimits (here
$\Fun_{\kappa}(-,-)$ denotes the $\oo$-category of functors that preserve
$\kappa$-small filtered colimits) \cite[5.3.5.10]{T}.
\end{itemize}
\end{prop}

The preceding discussion carries over when we restrict attention to stable categories.  In this setting, the stabilization $\Stab(\cC)$ is
initial amongst presentable stable $\oo$-categories admitting a functor from $\cC$ \cite[1.4.5.5]{HA}, in the sense that if $\cD$ is
a presentable stable $\oo$-category then $\Sigma^\oo_+$ induces an
equivalence   
\[
\Fun^{\L}(\Stab(\cC), \cD) \to \Fun^{\L}(\cC, \cD). 
\]

The $\oo$-category of stable presentable $\oo$-categories $\cP r^{L}_{st}$ is a full subcategory of $\cP r^L$, and
the $\Ind$-category sets up a correspondence between $Cat_{\oo}^{perf}$ with functors that preserve small colimits and compactly generated stable $\oo$-categories with functors which preserve compact objects and colimits denoted $\cP r^{L}_{st, \omega}$.

%%%%
\subsection{Tensor Products of Stable $\oo$-Categories}
The $\oo$-category $\cP r^{L}_{st}$ of presentable stable $\oo$-categories is a closed symmetric monoidal $\oo$-category with product $\otimes$ and internal
mapping object given by the presentable stable $\oo$-category $\Fun^{\L}(\cA,\cB)$ of colimit-preserving functors \cite[6.3.1.14,6.3.1.17]{HA}.  
Following \cite[4.1.2]{BFN}, we can then define the tensor product on small idempotent-complete stable $\oo$-categories as
\[
\cC \idemtimes \cD = (\Ind(\cC) \otimes \Ind(\cD))^{\omega}.
\]
The tensor product of idempotent-complete small stable $\oo$-categories is characterized by the universal property that maps out of $\cA \otimes \cB$ correspond to maps out of the product $\cA \times \cB$ which preserve finite colimits in each variable \cite[4.4]{BFN}.
If $\cA$ and $\cB$ are arbitrary small stable $\oo$-categories, then we set $\cA \idemtimes \cB:=\Idem(\cA) \idemtimes \Idem(\cB)$.

More precisely, we can define $Cat_{\oo}^{perf}$ as a symmetric monoidal $\oo$-category as follows.  Let $\cP r^{L}_{st, \omega}$ denote the
subcategory of $\cP r^{L}_{st}$ on the compactly-generated stable $\oo$-categories with functors that preserve colimits and compact objects.  The criterion of \cite[2.2.1.2]{HA} implies that $\cP r^{L}_{st, \omega}$ is a symmetric monoidal subcategory of $\cP r^{L}_{st}$; the tensor product of compactly-generated stable $\oo$-categories is itself compactly-generated, as is the unit
$Sp \simeq \Ind(Sp^\omega)$.

For a small stable idempotent-complete $\oo$-category $\cA$ and a presentable $\oo$-category $\cB$, $\Fun^{ex}$ and $\Fun^{L}$ are related by the formula
\[
\Fun^{ex}(\cA, \cB) \simeq \Fun^{L}(\Ind(\cA), \cB),
\]
which follows from \cite[5.3.5.10]{T} and the fact that functors which preserve filtered colimits and finite colimits preserve all colimits. Note that 
\[
\Ind : Cat_{\oo}^{perf} \to \cP r^{L}_{st}
\]
factors through the full subcategory $\cP r^{L}_{st, \omega}$ by definition.  

\begin{prop} \label{perfiscg}
This gives an equivalence of $\oo$-categories
between $Cat_{\oo}^{perf}$ and the subcategory $\cP r^{L}_{st, \omega}$ of
$\cP r^{L}_{st}$ whose objects are the compactly-generated stable $\oo$-categories and whose maps
\[
\Fun^{ex}(\cA,\cB)\simeq\Fun^{\L}_\omega(\Ind(\cA),\Ind(\cB))\subset\Fun^{\L}(\Ind(\cA),\Ind(\cB)), 
\]
are the full subcategory of the colimit-preserving functors $\Ind(\cA) \to \Ind(\cB)$ which preserve compact objects \cite[5.5.7.10]{T}.  
\end{prop}

We regard $Cat_{\oo}^{perf}$ as a symmetric monoidal $\oo$-category via this equivalence.  The observation of \cite[6.3.1.17]{HA} implies that $Cat_{\oo}^{perf}$ is closed.  Hence we have the following result.

\begin{prop}
The $\oo$-category of small idempotent-complete stable $\oo$-categories is a closed symmetric monoidal category with respect to $\idemtimes$.
The unit is the $\oo$-category $Sp^{\omega}$ of compact spectra and the internal mapping object is given for small idempotent-complete stable
$\oo$-categories $\cA$ and $\cB$ by $\Fun^{ex}(\cA, \cB)$.
\end{prop}

\subsection{$\oo$-Operads}\mbox{}\\
In this section we briefly recall the basic definitions and properties of $\oo$-operads, the $\oo$-categorical notion of a colored operad following \cite[Chapter 2]{HA}. Note that ordinary operads are colored operads with only one color. Thus, there is a slight abuse of terminology.  We use the language of $\oo$-operads to define symmetric monoidal $\oo$-categories and also algebra and module objects within a symmetric monoidal $\oo$-category. We then use the language of $\oo$-operads to show the nerve functor is monoidal.

We will need a few technical definitions before we can define an $\oo$-operad.
Let $\Gamma$ denote the category with objects the pointed sets $ \la n \ra = \{*,1,2, \ldots , n\}$ and morphisms those functions which preserve the base point $*$.
\begin{defn}
Let $ \la n \ra^{\circ} = \{1,2, \ldots , n\}$, we will say a morphism $f : \la m \ra \to \la n \ra$ is inert if  for $i \in \la n \ra^{\circ}$, the inverse image $f^{-1}(i)$ has exactly one element.
\end{defn}
Thus, $f : \la m \ra \to \la n \ra$ is inert if $\la n \ra$ is obtained from $\la m \ra$ by identifying some subset of $\la m \ra^{\circ}$ with the base point $*$.

An example we will use is
\begin{ex} 
For $1 \le i \le n$, let $\rho^i : \la n \ra \to \la 1 \ra$  denote the inert morphism
$$\rho^i(j) = \left\{ \begin{array}{rl}
1 & \mbox{if } i=j \\
* & \mbox{otherwise.}
\end{array}\right.$$
\end{ex}

\begin{defn} Let $p: X \to S$ be an inner fibration of simplicial sets (i.e. the fiber over any vertex of $S$ is an $\oo$-category), and let $f:x \to y$ be an edge of $X$, then $f$ is p-coCartesian if the natural map $X_{f/} \rightarrow X_{x/} \times_{S_{p(x)/}} S_{p(f)/}$ is a trivial Kan fibration (or if $f$ satisfies the properties of \cite[2.4.1.8]{T}).  
\end{defn}

\begin{rem}
Informally, if $\bar{f}:s \to s'$ is an edge in $S$ and $f:x \to x'$ lifts $\bar{f}$, then if $f$ is $p$-coCartesian it is determined up to equivalence by $\bar{f}$ and its source $x$.
\end{rem}

\begin{defn} Let $p: X \to S$ be an inner fibration of simplicial sets, $p$ is a coCartesian fibration of simplicial sets if for every edge $\bar{f}:s \to s'$ in $S$, and every vertex $x \in X$ with $p(x)=s$, there exists a $p$-coCartesian edge $f:x \to x'$ with $p(f)=\bar{f}$.
\end{defn}

\begin{defn} 
An $\oo$-operad is an $\oo$-category $\cO^{\otimes}$ and a functor 
$$p: \cO^{\otimes} \to N(\Gamma)$$
satisfying the following conditions \cite[2.1.1.10]{HA}:
\begin{itemize}
\item For every inert morphism $f : \la m \ra \to \la n \ra$ in $\Gamma$ and every object $C \in \cO^{\otimes}_{\la m \ra}$, there is a p-coCartesian morphism $\tilde{f}: C \to C'$ in $\cO^{\otimes}$ lifting $f$.
\item Let $C \in \cO^{\otimes}_{\la m \ra}$ and $C' \in \cO^{\otimes}_{\la n \ra}$ be objects, let $f : \la m \ra \to \la n \ra$ be a morphism in $\Gamma$, and let $map^f_{\cO^{\otimes}}(C,C')$ denote the union of the components of $map_{\cO^{\otimes}}(C,C')$ which lie over $f \in \Hom_{\Gamma}(\la m \ra , \la n \ra)$. Choose $p$-coCartesian morphisms $C' \to C'_i$ lying over the morphism $\rho^i: \la n \ra \to \la 1 \ra$ for each $1 \le i \le n$. The the induced map
$$map^f_{\cO^{\otimes}}(C,C') \to \prod_i map^{f \circ \rho^i}_{\cO^{\otimes}}(C,C'_i)$$
is a homotopy equivalence.
\item For every finite collection $C_1, C_2, \ldots, C_n$ of $\cO^{\otimes}_{\la 1 \ra}$, there exists an object $C$ of $\cO^{\otimes}_{\la n \ra}$ and $p$-coCartesian morphisms $C \to C_i$ covering each $\rho^i$.
\end{itemize}
\end{defn}

\begin{ex} The identity map $N(\Gamma) \to N(\Gamma)$ is an $\oo$-operad. It will be denoted by $Comm^{\otimes}$, and it is the $\oo$-categorical version of the $E_{\oo}$ operad.  More generally, for each $1 \le n \le \oo$, there is a topological category $\tilde{\mathbb{E}}[n]$ \cite[5.1.0.2]{HA} with a natural functor $N(\tilde{\mathbb{E}}[n]) \to N(\Gamma)$, which results in $\oo$-categorical versions of the $\cE_n$ operads.
\end{ex}

\begin{ex} Let $\cO$ be a colored operad, then \cite[2.1.1.7]{HA} constructs a category $\cO^{\otimes}$ whose objects are finite sequences of colors in $\cO$ and a map $\cO^{\otimes} \to\Gamma$. The properties of $\cO^{\otimes}$ imply that the induced map $N(\cO^{\otimes}) \to N(\Gamma)$ is an $\oo$-operad.
\end{ex}

Given an $\oo$-operad $q: \cO^{\otimes} \to N(\Gamma)$ and a coCartesian fibration $p: \cC^{\otimes} \to \cO^{\otimes}$, we will say that $p: \cC^{\otimes} \to \cO^{\otimes}$ is an $\cO$-monoidal $\oo$-category if the composite $q \circ p: \cC^{\otimes} \to N(\Gamma)$ is an $\oo$-operad. Such a map $p$ is called a coCartesian fibration of $\oo$-operads.

\begin{defn} A symmetric monoidal $\oo$-category \cite[2.1.2.18]{HA} is an $\oo$-category $\cC$ and a coCartesian fibration of $\oo$-operads $p: \cC^{\otimes} \to N(\Gamma)$. The underlying $\oo$-category $\cC$ is obtained by $\cC = p^{-1}(\la 1 \ra)$. By abuse of terminology, we say that $\cC$ is a symmetric monoidal $\oo$-category.
\end{defn}

Thus, a symmetric monoidal $\oo$-category is a coCartesian fibration $p: \cC^{\otimes} \to N(\Gamma)$ which induces equivalences of $\oo$-categories $\cC^{\otimes}_{\la n \ra} \simeq \cC^n$, where $\cC$ is the $\oo$-category $\cC^{\otimes}_{\la 1 \ra}$. Moreover the morphisms $\alpha:{\la 0 \ra} \to {\la 1 \ra}$ and $\beta: {\la 2 \ra} \to {\la 1 \ra}$ determine functors
$$\Delta^0 \to \cC \mbox{ and } \cC \times \cC \to \cC$$
which are well-defined up to a contractible space of choice. These maps induce a unit object $\bold{1} \in \cC$ and a monoidal structure $\otimes$ which satisfy all of the properties of a symmetric monoidal category up to homotopy.

\begin{ex} If $\cC$ is a symmetric monoidal category, then $N(\cC)$ is a symmetric monoidal $\oo$-category.
\end{ex}

We now develop the definition of an algebra over an $\oo$-operad. We start with a technical definition.

\begin{defn} 
A morphism $f$ in an $\oo$-operad $p:\cO^{\otimes} \to N(\Gamma)$ is called inert if $p(f)$ is inert and $f$ is p-coCartesian.  
\end{defn}

\begin{defn}
A morphism of $\oo$-operads is a map of simplicial sets $f: \cO^{\otimes} \to \cO'^{\otimes}$ over $N(\Gamma)$ such that $f$ takes inert morphisms in $\cO^{\otimes}$ to inert morphisms in $\cO'^{\otimes}$. The $\oo$-category of $\oo$-operad maps is denoted $Alg_{\cO}(\cO')$ and is considered as a full subcategory of $\Fun_{N(\Gamma)}(\cO^{\otimes},\cO'^{\otimes})$.
\end{defn}

More generally, we have
\begin{defn} If $p: \cC^{\otimes} \to \cO^{\otimes}$ is a fibration of $\oo$-operads (i.e. a categorical fibration \cite[2.2.5.1]{T}) and $f: \cO'^{\otimes} \to \cO^{\otimes}$ is a map of $\oo$-operads, let $Alg_{\cO'/\cO}(\cC)$ be the full subcategory of $\Fun_{\cO}(\cO'^{\otimes},\cC^{\otimes})$ spanned by the maps of $\oo$-operads. When $\cO = \cO'$ and $f$ is the identity, we will denote this $\oo$-category by $Alg_{\cO}(\cC)$, the $\oo$-category of $\cO$-algebra objects in $\cC$.
\end{defn}

\begin{ex} If $\cC$ is a symmetric monoidal $\oo$-category, the $\oo$-category $Alg_{Comm}(\cC^{\otimes})$ is the $\oo$-category of homotopy coherent commutative algebras in $\cC$.
\end{ex}

\begin{defn} Let $p:\cC^{\otimes} \to \cO^{\otimes}$ and $q:\cC^{\otimes} \to \cO^{\otimes}$ be two coCartesian fibrations of $\oo$-operads.  Denote by
$\Fun^{\otimes}_{\cO}(\cC^{\otimes},\cD^{\otimes})$ the subcategory of $\Fun_{\cO}(\cC^{\otimes},\cD^{\otimes})$ spanned by $\oo$-operad maps that furthermore take $p$-coCartesian morphisms to $q$-coCartesian morphisms.  We call this subcategory the $\oo$-category of $\cO$-monoidal functors.
\end{defn}

\begin{ex}  If $\cC$ and $\cD$ are two symmetric monoidal $\oo$-categories, the $\oo$-category $\Fun^{\otimes}_{Comm}(\cC^{\otimes},\cD^{\otimes})$ is the $\oo$-category of symmetric monoidal functors whereas the $\oo$-category $Alg_{\cC/Comm}(\cD^{\otimes})$ is the $\oo$-category of \emph{lax} symmetric monoidal functors. 
\end{ex}

\begin{rem} \label{andrew} Let $\cC$ and $\cD$ be two $\cO$-monoidal $\oo$-categories and $\cF:\cC \to \cD$ be an $\cO$-monoidal functor. Then composition with $\cF$ induces a natural map
$$Alg_{\cO}(\cC) \to Alg_{\cO}(\cD)$$
since an $\cO$-monoidal functor is in particular a map of $\oo$-operads.  Furthermore, if $\cF$ is an equivalence of $\cO$-monoidal $\oo$-categories, it is also an equivalence of $\oo$-operads over $\cO$. 
Thus, it induces an equivalence
$$Alg_{\cO}(\cC) \simeq Alg_{\cO}(\cD)$$
of $\oo$-categories of $\cO$-algebras.
\end{rem}

We now use the language of $\oo$-operads to show the nerve functor is monoidal.  This will be used to construct the functor in our main theorem in the sequel.
\begin{defn} We define a colored operad $\bold{lem}$ \cite[4.2.1]{HA} as follows:
\begin{itemize}
\item The set of objects of $\bold{lem}$ has two objects $\bold{a}$ and $\bold{m}$.
\item Let $\{X_i\}_{i \in I}$ be a finite collection of objects in $\bold{lem}$, and let $Y$ be a another object. If $Y=\bold{a}$, then $Mul_{\bold{lem}}(\{X_i\},Y)$ is the collection of all orderings on $I$ provided that each $X_i = \bold{a}$ and is empty otherwise. If $Y=\bold{m}$, then $Mul_{\bold{lem}}(\{X_i\},Y)$ is the collection of all orderings $\{i_1 < \ldots <i_n \}$ on the set $I$ such that $X_{i_n} = \bold{m}$ and $X_{i_j} = \bold{a}$ for $j < n$.
\item The composition law on $\bold{lem}$ is determined by composing linear orderings in the natural way. 
\end{itemize}
\end{defn}

\begin{rem}
Restricting to the object $\bold{a}$ we get a subcolored operad isomorphic to the associative operad $\bold{Ass}$ of \cite[4.1.1.1]{HA}. Thus, if $\cC$ is a symmetric monoidal category,
$F: \bold{lem} \to \cC$ a map of colored operads, $F(\bold{a}) = A \in \cC$, $F(\bold{m}) = M \in \cC$, then the unique  $\phi \in Mul_{\bold{lem}}(\{\bold{a},\bold{m} \}, \bold{m})$ determines a map $\phi: A \otimes M \to M$, which exhibits $M$ as a left $A$-module, where $A$ is an associative algebra object in $\cC$.
\end{rem}

\begin{defn} Applying \cite[2.1.1.7]{HA} to the colored operad $\bold{lem}$, we obtain a category $\bold{lem}^{\otimes}$ and a map $\bold{lem}^{\otimes} \to \Gamma$. Thus, the nerve $\cL \cM^{\otimes} := N(\bold{lem}^{\otimes}) \to N(\Gamma)$ is the $\oo$-operad which controls left module objects over an associative algebra. There is also an $\oo$-operad $\cA ss^{\otimes}$ obtained by applying the same construction to $\bold{Ass}$.
\end{defn}

\begin{defn}  Let $\cC^{\otimes} \to \cA ss^{\otimes}$ be a fibration of $\oo$-operads, and $\cM$ an $\oo$-category. A weak enrichment of $\cM$ over $\cC^{\otimes}$ is a fibration of $\oo$-operads $q: \cO^{\otimes} \to \cL \cM^{\otimes}$ such that $\cO^{\otimes}_{\bold{a}} \simeq \cC^{\otimes}$ and $\cO^{\otimes}_{\bold{m}} \simeq \cM$. We let $LMod(\cM) := Alg_{\cL \cM}(\cO)$ be the $\oo$-category of left module objects in the $\oo$-category $\cM$ \cite[4.2.1.13]{HA}.
\end{defn}

We will need the following result we will need in our comparison theorem.
\begin{prop} \label{nerve} Let $\cC$ be a monoidal category, and $s: \bold{lem} \to \cC$ a left module object of $\cC$, then $N(s): \cL \cM^{\otimes} \to N(\cC)^{\otimes}$ defines a left module object of $N(\cC)$.
\end{prop}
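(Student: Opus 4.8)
The plan is to unwind the definitions so that the statement reduces to a combinatorial check on inert morphisms, and then to observe that a map of colored operads manifestly respects the relevant combinatorics. First I would recall from \cite[2.1.1.7]{HA} the explicit description of the category $\cO^{\otimes}$ attached to a colored operad $\cO$: its objects over $\la n\ra\in\Gamma$ are finite sequences of colors $(X_1,\dots,X_n)$, and a morphism $(X_1,\dots,X_m)\to(Y_1,\dots,Y_n)$ is a pair consisting of a map $\gamma\colon\la m\ra\to\la n\ra$ in $\Gamma$ together with operations $\phi_j\in Mul_{\cO}(\{X_i\}_{i\in\gamma^{-1}(j)},Y_j)$ for $j\in\la n\ra^{\circ}$; and a map of colored operads $s\colon\cO\to\cO'$ induces a functor $s^{\otimes}\colon\cO^{\otimes}\to\cO'^{\otimes}$ over $\Gamma$ by applying $s$ to the colors and to each $\phi_j$. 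Applying this with $\cO=\bold{LM}$ and $\cO'=\cC$ produces $s^{\otimes}\colon\bold{LM}^{\otimes}\to\cC^{\otimes}$ over $\Gamma$, and by the definitions of $\cL \cM^{\otimes}$ and $N(\cC)^{\otimes}$ the map $N(s)$ in the statement is simply $N(s^{\otimes})\colon\cL \cM^{\otimes}\to N(\cC)^{\otimes}$, a map of simplicial sets over $N(\Gamma)$. Since $\cL \cM^{\otimes}\to N(\Gamma)$ is an $\oo$-operad, and since $N(\cC)^{\otimes}\to N(\Gamma)$ is an $\oo$-operad (indeed a monoidal $\oo$-category) because $\cC$ is monoidal, to show that $N(s)$ defines a left module object of $N(\cC)$ — that is, that $N(s)\in Alg_{\cL\cM}(N(\cC)^{\otimes})=LMod(N(\cC))$ — it remains only to verify that $N(s)$ carries inert morphisms of $\cL \cM^{\otimes}$ to inert morphisms of $N(\cC)^{\otimes}$.

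Next I would pin down the inert morphisms concretely. Combining the definition of an $\oo$-operad with the description above, a morphism $(\gamma,\{\phi_j\})$ of $\bold{LM}^{\otimes}$ becomes an inert morphism of $\cL \cM^{\otimes}$ exactly when $\gamma$ is an inert map of $\Gamma$ and each $\phi_j$ is an identity operation; concretely, these are the canonical projections $(X_1,\dots,X_m)\to(X_{i_1},\dots,X_{i_n})$ onto the subsequence indexed by $\{i_1<\dots<i_n\}=\gamma^{-1}(\la n\ra^{\circ})$. The identical description holds for the inert morphisms of $N(\cC)^{\otimes}=N(\cC^{\otimes})$. Because $s^{\otimes}$ acts by $(\gamma,\{\phi_j\})\mapsto(\gamma,\{s(\phi_j)\})$ and $s$, being a map of colored operads, sends identity operations to identity operations, $s^{\otimes}$ carries each canonical projection of $\bold{LM}^{\otimes}$ to the corresponding projection of $\cC^{\otimes}$; hence $N(s)=N(s^{\otimes})$ carries inert morphisms of $\cL \cM^{\otimes}$ to inert morphisms of $N(\cC)^{\otimes}$, which is precisely the condition for $N(s)$ to be a map of $\oo$-operads. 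Here I would use the standard fact that an edge of the nerve of an ordinary category is coCartesian over $N(\Gamma)$ exactly when the underlying morphism is coCartesian over $\Gamma$ in the classical $1$-categorical sense, so that passing to nerves neither creates nor destroys inert edges; this is immediate from unwinding the defining lifting property, cf.\ \cite[2.4.1]{T}.

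The one step carrying genuine content is the bookkeeping in the middle paragraph: correctly matching the inert morphisms of the $\oo$-operads $\cL \cM^{\otimes}$ and $N(\cC)^{\otimes}$ with the elementary ``projection'' morphisms of the ordinary categories $\bold{LM}^{\otimes}$ and $\cC^{\otimes}$, and checking that the explicit formula for $s^{\otimes}$ respects this structure. Everything else — functoriality of the construction $(-)^{\otimes}$, the fact that $N(\cC)^{\otimes}$ is an $\oo$-operad when $\cC$ is monoidal, and that $N(s)$ lies over $N(\Gamma)$ — is formal and follows from the facts recalled earlier in this section.
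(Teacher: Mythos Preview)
Your proposal is correct and follows essentially the same route as the paper: both arguments observe that the colored-operad map $s:\bold{LM}\to\cC$ induces a functor $s^{\otimes}:\bold{LM}^{\otimes}\to\cC^{\otimes}$ over $\Gamma$, take nerves to obtain $N(s):\cL\cM^{\otimes}\to N(\cC)^{\otimes}$, and then verify that $N(s)$ preserves inert morphisms and hence lies in $Alg_{\cL\cM}$. The only difference in presentation is that the paper first records the weak enrichment $\cO^{\otimes}=N(\cC)^{\otimes}\times_{\cA ss^{\otimes}}\cL\cM^{\otimes}$ so as to land in $Alg_{\cL\cM}(\cO)$ exactly as in its definition of $LMod$, whereas you work directly over $N(\Gamma)$; conversely, you spell out the inert-morphism check concretely while the paper merely asserts it.
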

\begin{proof} Let $N(\cC)^{\otimes} \to \cA ss^{\otimes}$ be a monoidal $\oo$-category. Then 
$$\cO^{\otimes} = N(\cC)^{\otimes} \times_{\cA ss^{\otimes}} \cL \cM^{\otimes}$$
exhibits the $\oo$-category $N(\cC)$ as weakly enriched over $N(\cC)^{\otimes}$. The map $s: \bold{lem} \to \cC$ of colored operads induces a map $s: \bold{lem}^{\otimes} \to \cC^{\otimes}$ of categories, which in turn induces a map $N(s): \cL \cM^{\otimes} \to N(\cC)^{\otimes}$ of $\oo$-operads. This gives an element of $N(s) \in Fun_{\cL \cM}(\cL \cM, \cO)$, and furthermore, $N(s)$ sends inert morphisms to inert morphism, and hence, $N(s) \in Alg_{\cL \cM}(\cO)$ determines a left module object in $N(\cC)$.
\end{proof}

\subsection{From Cofibrant Spectral Categories to Flat Spectral Categories}
In this section we will denote a symmetric monoidal $\oo$-category $\cA^{\otimes}$ by additionally having the superscript ``$\otimes$".  

When $C$ is a symmetric monoidal model category such that the weak equivalences are preserved by the product, the underlying $\oo$-category of $C$ produces a symmetric monoidal $\oo$-category $N(C^c)[W^{-1}]^{\otimes}$ by \cite[4.1.3.4]{HA}, \cite[4.1.3.6]{HA}.  For instance, when $C$ is a symmetric monoidal model category, the cofibrant objects $C^c$ form a symmetric monoidal category with weak equivalences preserved by the product, and this symmetric monoidal category induces a symmetric monoidal structure on $N(C^c)[W^{-1}]$, which we will denote by $N(C^c)[W^{-1}]^{\otimes}$.

The category $Cat_{\cS}$ has a closed symmetric monoidal structure given by the pointwise smash product.  However, the pointwise smash product of cofibrant spectral categories is not necessarily cofibrant as is also the case for dg categories. Consequently, the model structure on $Cat_{\cS}$ is not monoidal for the pointwise smash structure.  

To resolve this problem, we will use the notion of flat objects and functors following \cite[Chapter 3]{BGT2}. 

\begin{defn}
A functor between model categories is flat if it preserves weak equivalences and colimits. An object X of a model category (whose underlying category is monoidal) is then said to be flat if the functor $X \otimes (-)$ is a flat functor. 
\end{defn}

For instance, cofibrant spectra are in particular flat. The following definition given in \cite[Chapter 3]{BGT2} satisfies this notion of flatness. 

\begin{defn} \label{flatsc} A spectral category C is pointwise-cofibrant if each morphism spectrum $C(x, y)$ is a cofibrant spectrum. 
\end{defn} 

The following proposition summarizes the properties of pointwise-cofibrant spectral categories that we will need \cite[3.2]{BGT2}.
\begin{prop}
The following are properties of pointwise-cofibrant spectral categories:
\begin{itemize}
\item Every spectral category is functorially Morita equivalent to a pointwise-cofibrant spectral category with the same objects.
\item The subcategory of pointwise-cofibrant spectral categories is closed under the pointwise smash product.
\item A pointwise-cofibrant spectral category is flat with respect to the pointwise smash product of spectral categories.
\item If $C$ and $D$ are pointwise-cofibrant spectral categories, the pointwise smash product $C \wedge D$ computes the derived smash product $C \wedge^L D$.
\end{itemize}
\end{prop}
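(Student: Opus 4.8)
\emph{Proof sketch (plan).} The plan is to reduce all four assertions to two standard facts about the symmetric monoidal model category $\cS$ of symmetric spectra \cite{HSM}: the pushout--product axiom, from which a smash product of two cofibrant spectra is again cofibrant; and the monoid axiom together with the compatibility of the stable structure with $\wedge$, from which smashing with a fixed cofibrant spectrum preserves stable equivalences, that is, cofibrant spectra are flat in $\cS$. Granting these, each bullet becomes essentially formal, and I would argue as follows.

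For the first bullet I would fix a functorial cofibrant replacement $Q$ on $\cS$ that is lax symmetric monoidal and comes with a monoidal natural transformation $Q \Rightarrow \mathrm{id}$ (equivalently, one works with the levelwise model structure on spectral categories having a fixed set of objects and checks the remaining functoriality by hand). Given a spectral category $C$, put $(QC)(x,y) := Q(C(x,y))$; the lax structure maps $Q(C(y,z)) \wedge Q(C(x,y)) \to Q\big(C(y,z) \wedge C(x,y)\big) \to Q(C(x,z))$ and $\mathbb{S} \to Q(\mathbb{S}) \to Q(C(x,x))$ (the second being $Q(e_x)$) furnish $QC$ with composition and units, and associativity and unitality are inherited. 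The canonical map $QC \to C$ is the identity on objects and a stable equivalence on every hom spectrum, hence a DK-equivalence and a fortiori a Morita equivalence; functoriality in $C$ is inherited from that of $Q$. This produces a pointwise-cofibrant replacement with the same objects.

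For the second bullet, the hom spectra of $C \wedge D$ are the $C(c,c') \wedge D(d,d')$, cofibrant when $C$ and $D$ are pointwise-cofibrant by the pushout--product axiom; the monoidal unit (the one-object category on $\mathbb{S}$) is pointwise-cofibrant as well. For the third bullet, $C \wedge (-)$ is a left adjoint for the closed symmetric monoidal structure on $Cat_{\cS}$, hence preserves colimits, so only preservation of weak equivalences is at issue. Given a DK-equivalence $f \colon D \to D'$, the map $C \wedge f$ is $\mathrm{id}_{C(c,c')} \wedge f_{d,d'}$ on hom spectra, a stable equivalence because $C(c,c')$ is flat in $\cS$; full faithfulness of $[C \wedge f]$ is then immediate from the resulting $\pi_0$-isomorphism on hom spectra, and essential surjectivity follows from that of $[f]$ together with the fact that the lax map $\pi_0(A) \otimes \pi_0(B) \to \pi_0(A \wedge B)$ carries units to units, so an isomorphism $fd \cong d'$ in $[D']$ yields an isomorphism $(c,fd) \cong (c,d')$ in $[C \wedge D']$. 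Hence $C \wedge f$ is again a DK-equivalence; the same bookkeeping with Morita equivalences (detected after idempotent completion) gives flatness for the Morita structure.

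For the fourth bullet, the DK-cofibrant objects of $Cat_{\cS}$ are pointwise-cofibrant --- this is visible from the generating cofibrations --- and hence flat by the third bullet, so the derived smash product $C \wedge^L D$ is correctly computed as $\widetilde C \wedge \widetilde D$ for DK-cofibrant replacements $\widetilde C \to C$ and $\widetilde D \to D$. When $C$ and $D$ are themselves pointwise-cofibrant, I would factor the comparison map as $\widetilde C \wedge \widetilde D \to C \wedge \widetilde D \to C \wedge D$: the first arrow is $(\widetilde C \to C) \wedge \mathrm{id}_{\widetilde D}$ and $\widetilde D$ is flat, the second is $\mathrm{id}_C \wedge (\widetilde D \to D)$ and $C$ is flat, so both are weak equivalences and $C \wedge D \simeq C \wedge^L D$. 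The one genuinely delicate point is the first bullet --- arranging a cofibrant replacement on $\cS$ monoidal enough to reassemble into a spectral category with the same objects, equivalently controlling functoriality of the fixed-object-set replacement as the object set varies; the remaining three bullets are formal consequences of the pushout--product and monoid axioms for symmetric spectra.
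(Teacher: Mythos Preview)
The paper does not supply its own proof of this proposition: it is stated as a summary of \cite[3.2]{BGT2} and is simply cited. So there is no in-paper argument to compare against; the relevant comparison is with \cite{BGT2}.

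Your sketch is sound and tracks the argument in \cite{BGT2} closely. The second and fourth bullets are indeed formal once one knows cofibrant symmetric spectra are flat, and your two-step factorization for the fourth bullet is the standard one. For the first bullet, \cite{BGT2} takes the route you mention parenthetically: they use the model structure on spectral categories with a fixed object set (equivalently, $O$-graph algebras) and take cofibrant replacement there, then check functoriality as the object set varies. Your primary approach via a lax symmetric monoidal cofibrant replacement $Q$ on $\cS$ with a monoidal transformation $Q\Rightarrow\mathrm{id}$ works in principle, but arranging such a $Q$ is exactly the delicate point you flag, and in practice one falls back on the fixed-object-set argument. One small gap worth tightening: in the third bullet you dispatch Morita equivalences with ``the same bookkeeping,'' but this step is not quite as immediate as the DK case, since Morita equivalences are detected only after passing to perfect modules. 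The clean argument is that $C\wedge(-)$ preserves DK-equivalences (which you showed) and also preserves the generating Morita-local equivalences (the Yoneda embeddings $D\to\hat D_{perf}$), hence preserves all Morita equivalences by the universal property of Bousfield localization; alternatively one checks compatibility of $\widehat{(-)}_{perf}$ with $C\wedge(-)$ directly. Either way the conclusion stands.
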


Therefore, we use the subcategory $Cat^{flat}_{\cS}$ of pointwise-cofibrant spectral categories to produce a suitable symmetric monoidal model of the $\oo$-category of idempotent-complete small stable $\oo$-categories. For instance, we have that \cite[3.4]{BGT2}:

\begin{prop}
The functor induced by cofibrant replacement $Cat^{flat}_{\cS} \to Cat^c_{\cS}$, induces a categorical equivalence $N(Cat^{flat}_{\cS})[W^{-1}] \simeq N(Cat^c_{\cS})[W^{-1}]$, where $W$ is the class of Morita equivalences.
\end{prop}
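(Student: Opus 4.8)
The plan is to realize the cofibrant-replacement functor as one half of a homotopy equivalence of relative categories, using the two functorial replacements furnished by the previous proposition. Fix a functorial cofibrant replacement $\mathcal{R}\colon Cat_{\cS}\to Cat^c_{\cS}$ for the Morita model structure (which exists because that model structure is combinatorial), together with its natural Morita equivalence $r_X\colon\mathcal{R}(X)\to X$; and fix a functorial pointwise-cofibrant replacement $\mathcal{Q}\colon Cat_{\cS}\to Cat^{flat}_{\cS}$ as in the first bullet above, together with its natural Morita equivalence $q_X\colon\mathcal{Q}(X)\to X$. The functor in the statement is $F:=\mathcal{R}|_{Cat^{flat}_{\cS}}\colon Cat^{flat}_{\cS}\to Cat^c_{\cS}$, and I would pair it with $G:=\mathcal{Q}|_{Cat^c_{\cS}}\colon Cat^c_{\cS}\to Cat^{flat}_{\cS}$.

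The steps are then as follows. First, $F$ and $G$ are relative functors, i.e. they preserve Morita equivalences: since $r$ and $q$ are natural weak equivalences, the two-out-of-three property forces $\mathcal{R}$ and $\mathcal{Q}$ to carry every Morita equivalence of $Cat_{\cS}$ to a Morita equivalence, and $F$, $G$ are restrictions of these. Hence $F$ and $G$ descend to functors $N(F)$, $N(G)$ on the localizations $N(Cat^{flat}_{\cS})[W^{-1}]$ and $N(Cat^c_{\cS})[W^{-1}]$. Second, I would produce natural Morita equivalences between the composites and the identities: for $X\in Cat^{flat}_{\cS}$ the composite $q_{\mathcal{R}(X)}$ followed by $r_X$ is a map $(G\circ F)(X)=\mathcal{Q}(\mathcal{R}(X))\to X$, natural in $X$ and a composite of two Morita equivalences, giving $G\circ F\Rightarrow\mathrm{id}_{Cat^{flat}_{\cS}}$; symmetrically $r_{\mathcal{Q}(Y)}$ followed by $q_Y$ gives a natural Morita equivalence $(F\circ G)(Y)=\mathcal{R}(\mathcal{Q}(Y))\to Y$, i.e. $F\circ G\Rightarrow\mathrm{id}_{Cat^c_{\cS}}$. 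Finally, since $F$ and $G$ are relative, passing to the localizations turns these natural transformations into natural transformations all of whose components are equivalences, hence equivalences of functors $N(G)\circ N(F)\simeq\mathrm{id}$ and $N(F)\circ N(G)\simeq\mathrm{id}$; so $N(F)$ is an equivalence $N(Cat^{flat}_{\cS})[W^{-1}]\simeq N(Cat^c_{\cS})[W^{-1}]$.

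I do not expect a genuine obstacle here: the argument is entirely formal once the previous proposition is granted. The only point that deserves a careful sentence is the last one, that a natural weak equivalence between relative functors becomes an equivalence of functors after $\oo$-localization; for this I would invoke the universal property of $N(C)[W^{-1}]$ recorded in \cite{HA}, noting that a natural transformation is classified by a functor out of $C\times\Delta^1$, which upon inverting the weak equivalences classifies a natural transformation of the localized functors whose components are the (equivalence) images of the original componentwise weak equivalences, and a componentwise-invertible transformation of $\oo$-functors is an equivalence. If one prefers to avoid even this, one can instead use that restricting a model category to its cofibrant objects does not change the underlying $\oo$-category, so that $N(Cat^c_{\cS})[W^{-1}]\simeq N(Cat_{\cS})[W^{-1}]$ already, and then only the inclusion-versus-$\mathcal{Q}$ half of the above is needed to identify $N(Cat^{flat}_{\cS})[W^{-1}]$ with $N(Cat_{\cS})[W^{-1}]$; carrying both halves is what makes the comparison functor of the statement explicit.
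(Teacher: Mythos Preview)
Your argument is correct and is the standard way to show that two full subcategories containing enough replacements give the same $\oo$-categorical localization. There is nothing to object to in the logic; the only cosmetic simplification available is that cofibrant spectral categories are already pointwise-cofibrant (the generating cofibrations of $Cat_{\cS}$ have cofibrant mapping spectra, and this is inherited under cell attachments and retracts), so one may take $G$ to be the inclusion $Cat^c_{\cS}\hookrightarrow Cat^{flat}_{\cS}$ rather than $\mathcal{Q}|_{Cat^c_{\cS}}$, which makes one of the two zigzags collapse to the single natural Morita equivalence $r$. Your version with $\mathcal{Q}$ is of course equally valid and has the virtue of not requiring that extra observation.

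As for comparison with the paper: the paper does not supply a proof at all---it simply records the statement as \cite[3.4]{BGT2}. So there is no ``paper's proof'' to compare against beyond the citation; your write-up is a self-contained justification of the cited fact, and the approach you give (functorial replacements plus natural weak equivalences to the identity, then invoke the universal property of $N(-)[W^{-1}]$) is exactly the argument one finds in \cite{BGT2}.
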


\begin{cor} We have an equivalence $N(Cat^{flat}_{\cS})[W^{-1}]  \simeq Cat^{perf}_{\oo}$.
\end{cor}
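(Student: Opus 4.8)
The plan is to deduce this by chaining together two equivalences that are already in place. The preceding proposition supplies, via cofibrant replacement, a categorical equivalence $N(Cat^{flat}_{\cS})[W^{-1}] \simeq N(Cat^c_{\cS})[W^{-1}]$, where $W$ is the class of Morita equivalences. It therefore suffices to identify $N(Cat^c_{\cS})[W^{-1}]$ with $Cat^{perf}_{\oo}$.

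For this I would invoke the theorem of Blumberg--Gepner--Tabuada \cite[4.23]{BGT1}, which asserts that $Cat^{perf}_{\oo}$ is the underlying $\oo$-category of $Cat_{\cS}$ equipped with the Morita model structure. By the discussion in \autoref{modeloo}, in particular \cite[1.3.7]{HA}, this underlying $\oo$-category may equivalently be computed after restricting to cofibrant objects; that is, $N(Cat^c_{\cS})[W^{-1}] \simeq N(Cat_{\cS})[W^{-1}] \simeq Cat^{perf}_{\oo}$. Composing this with the equivalence of the previous proposition yields $N(Cat^{flat}_{\cS})[W^{-1}] \simeq Cat^{perf}_{\oo}$.

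Since both inputs are available, no genuine obstacle arises; the only point deserving a moment's care is the compatibility of the three localizations, i.e. checking that a spectral functor between pointwise-cofibrant spectral categories is a Morita equivalence precisely when its image in $Cat^c_{\cS}$ (equivalently in $Cat_{\cS}$) is one. This is immediate because the inclusion $Cat^{flat}_{\cS} \hookrightarrow Cat_{\cS}$ both preserves and reflects weak equivalences, so the comparison maps of localizations assemble into the asserted composite equivalence.
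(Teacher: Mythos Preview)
Your argument is correct and is exactly the intended one: the paper states this as an immediate corollary of the preceding proposition combined with the earlier theorem \cite[4.23]{BGT1} identifying $Cat^{perf}_{\oo}$ with the underlying $\oo$-category of the Morita model structure on $Cat_{\cS}$. Your extra remark about compatibility of the localizations is a fine sanity check, but the paper treats the result as requiring no further proof.
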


Moreover, combined with \cite[4.1.3.4]{HA} and \cite[4.1.3.6]{HA}, we see that:
\begin{prop} The $\oo$-category $N(Cat^{flat}_{\cS})[W^{-1}]$ can be promoted to a symmetric monoidal $\oo$-category $N(Cat^{flat}_{\cS})[W^{-1}]^{\otimes}$.
\end{prop}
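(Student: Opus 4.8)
The plan is to invoke Lurie's criterion \cite[4.1.3.4]{HA}, \cite[4.1.3.6]{HA}, which upgrades an ordinary symmetric monoidal category $\cA$ equipped with a collection $W$ of morphisms — closed under composition, containing the isomorphisms, and \emph{compatible with the tensor product} in the sense that $f \otimes g \in W$ whenever $f, g \in W$ — to a symmetric monoidal $\oo$-category $N(\cA)[W^{-1}]^{\otimes}$ whose underlying $\oo$-category is the localization $N(\cA)[W^{-1}]$ and for which the localization functor is symmetric monoidal. So it suffices to verify two things: first, that the full subcategory $Cat^{flat}_{\cS}$ is a symmetric monoidal subcategory of $(Cat_{\cS}, \wedge)$; second, that the class $W$ of Morita equivalences is compatible with $\wedge$ in the above sense.

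The first point is immediate from the properties of pointwise-cofibrant spectral categories recalled above, i.e.\ \cite[3.2]{BGT2}: the smash product of hom spectra makes $Cat_{\cS}$ a closed symmetric monoidal category with unit the spectral category on one object with endomorphism spectrum the sphere $\mathbb{S}$; since $Cat^{flat}_{\cS}$ is closed under $\wedge$ and the unit lies in $Cat^{flat}_{\cS}$ (as $\mathbb{S}$ is a cofibrant symmetric spectrum), the associativity, symmetry and unit coherences restrict, so $Cat^{flat}_{\cS}$ is symmetric monoidal in its own right, with $W$ obviously closed under composition and containing the isomorphisms.

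The second point is where the real work lies. Given a Morita equivalence $f \colon C \to C'$ between pointwise-cofibrant spectral categories and an object $D \in Cat^{flat}_{\cS}$, I would argue that $f \wedge \mathrm{id}_D$ is again a Morita equivalence as follows. By the last clause of \cite[3.2]{BGT2}, on pointwise-cofibrant inputs the strict smash product computes the derived smash product, naturally in each variable; thus $f \wedge \mathrm{id}_D$ represents $f \wedge^{L} \mathrm{id}_D$ in the homotopy category of the Morita model structure on $Cat_{\cS}$, and the left-derived functor $(-) \wedge^{L} D$ descends to the Morita localization, hence sends Morita equivalences to Morita equivalences. So $f \wedge \mathrm{id}_D \in W$; by symmetry the same holds for smashing on the left, and for general $f, g \in W$ one factors $f \wedge g = (f \wedge \mathrm{id}) \circ (\mathrm{id} \wedge g)$ and uses closure of $W$ under composition. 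Granting this, \cite[4.1.3.4]{HA} and \cite[4.1.3.6]{HA} produce $N(Cat^{flat}_{\cS})[W^{-1}]^{\otimes}$ as required. The main obstacle is precisely this compatibility step: one needs not merely that $\wedge$ is homotopy invariant in each variable but that it preserves the \emph{Morita} equivalences (rather than just the Dwyer--Kan equivalences), and this is exactly what the identification of $\wedge$ with $\wedge^{L}$ on $Cat^{flat}_{\cS}$, together with the existence of the Morita model structure on $Cat_{\cS}$, provides.
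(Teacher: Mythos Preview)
Your proposal is correct and follows essentially the same approach as the paper: the paper simply invokes \cite[4.1.3.4]{HA} and \cite[4.1.3.6]{HA} together with the properties of pointwise-cofibrant spectral categories from \cite[3.2]{BGT2}, and you have spelled out precisely the verification of hypotheses that this citation entails. Your explicit attention to the fact that it is the \emph{Morita} equivalences (not merely the DK-equivalences) that must be preserved by $\wedge$, and your argument via the identification $\wedge \simeq \wedge^{L}$ on flat inputs, makes explicit what the paper leaves implicit in its reference to flatness and the derived smash product.
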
 

Then, \cite[3.5]{BGT2} also proves:
\begin{thm} There is an equivalence of symmetric monoidal $\oo$-categories 
$$N(Cat^{flat}_{\cS})[W^{-1}]^{\otimes} \simeq (Cat^{perf}_{\oo})^{\otimes}.$$
\end{thm}

Let $R$ be an $E_{\oo}$-ring spectrum. There is a stable presentable $\oo$-category $R \dash \Mod$ of modules over $R$ in the $\oo$-category of spectra \cite[7.1.1.5]{HA}.

\begin{defn} The $\oo$-category of perfect modules over $R$, $\Perf(R)$, is the smallest stable subcategory of the $\oo$-category $R \dash \Mod$ which contains $R$ and is closed under retracts \cite[7.2.5.1]{HA}.
\end{defn}

\begin{prop}
Let $R$ be and $E_{\oo}$-ring spectrum, then the $\oo$-category $\Perf(R)$ is a commutative algebra object in $(Cat^{perf}_{\oo})^{\otimes}$.
\end{prop}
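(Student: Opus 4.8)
The plan is to obtain the commutative algebra structure by transporting it along a symmetric monoidal functor, rather than by building it by hand. An $E_{\oo}$-ring spectrum is by definition a commutative algebra object of the symmetric monoidal $\oo$-category $Sp$ of spectra, so $R$ determines an object of $Alg_{Comm}(Sp)$. I would then invoke Lurie's theory of module categories in a presentable symmetric monoidal $\oo$-category \cite[\S 4.5]{HA}: the assignment $A \mapsto A \dash Mod = Mod_A(Sp)$ refines to a functor $Alg_{Comm}(Sp) \to Alg_{Comm}(Pr^L_{st})$, so that $R \dash Mod$ is a commutative algebra object of $Pr^L_{st}$, with tensor product the relative smash product $- \wedge_R -$ and unit the $R$-module $R$.

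It then remains to descend this structure from $Pr^L_{st}$ to $Cat^{perf}_{\oo}$. First I would note that $R \dash Mod$ is compactly generated by its unit $R$ and that its subcategory of compact objects is the small, idempotent-complete stable $\oo$-category $\mathrm{Perf}(R)$ of perfect $R$-modules, so that $R \dash Mod \simeq \mathrm{Ind}(\mathrm{Perf}(R))$; since $R$ is compact and $- \wedge_R -$ preserves compact objects, $\mathrm{Perf}(R)$ is moreover a symmetric monoidal stable subcategory. Because $\mathrm{Ind}$ identifies $Cat^{perf}_{\oo}$ symmetric-monoidally with the subcategory of $Pr^L_{st}$ spanned by the compactly generated objects and the compact-object-preserving functors \cite[6.3.1]{HA}, its inverse, passing to compact objects, is symmetric monoidal and hence carries commutative algebra objects to commutative algebra objects. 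Applying this to $R \dash Mod \in Alg_{Comm}(Pr^L_{st})$ produces $\mathrm{Perf}(R) \in Alg_{Comm}(Cat^{perf}_{\oo})$, which, under the standing identification of ``$R \dash Mod$'' with $\mathrm{Perf}(R)$ (equivalently, under the equivalence $N(Cat^{flat}_{\cS})[W^{-1}]^{\otimes} \simeq (Cat^{perf}_{\oo})^{\otimes}$ recorded above, through which the flat spectral category $R \dash Mod^{cell}$ equipped with $- \wedge_R -$ represents $\mathrm{Perf}(R)$), is the assertion of the proposition.

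The part that needs care, and where I expect the real work to lie, is coherence: both the $E_{\oo}$-structure on $R$ and the symmetric monoidal structure on $R \dash Mod$ are homotopy-coherent data, and one must be certain they assemble into a genuine commutative algebra object of the $\oo$-category $(Cat^{perf}_{\oo})^{\otimes}$, with all of its higher coherences, and not merely into a commutative monoid in a homotopy category or a strictly symmetric monoidal spectral-category model. This is precisely what the module-category machinery of \cite[\S 4.5]{HA} is designed to supply, so the residual work is to verify its hypotheses ($Sp$ is stable and presentably symmetric monoidal) and to check that ``passing to compact objects'' is symmetric monoidal on the relevant part of $Pr^L_{st}$; both are standard, the latter being dual to the symmetric monoidality of $\mathrm{Ind}$.
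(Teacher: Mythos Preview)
Your approach is essentially the same as the paper's: both defer the content to Lurie's \emph{Higher Algebra}. The paper's proof is a terse sequence of citations---$Sp$ is symmetric monoidal \cite[6.3.2]{HA}, $R\dash Mod$ is stable and presentable \cite[8.1.2.1]{HA}, and the commutative algebra structure follows from \cite[8.1.2.6]{HA}---while you unpack the same mechanism more explicitly (the functor $A \mapsto Mod_A(Sp)$ lands in $Alg_{Comm}(Pr^L_{st})$, then one passes to compact objects to land in $Cat^{perf}_{\oo}$). Your version has the merit of making visible the step from $Pr^L_{st}$ down to $Cat^{perf}_{\oo}$ via $\mathrm{Perf}(R)$, which the paper's phrasing leaves implicit; but substantively there is no difference in strategy.
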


We see that the notion of module categories over $\Perf(R)$ is well-defined.  In particular, by \cite[4.2.3.7]{HA}:
\begin{cor}
The $\oo$-category $\Mod_{\Perf(R)}((Cat^{perf}_{\oo})^{\otimes})$ is a presentable $\oo$-category. 
\end{cor}

\subsection{The Barr-Beck-Lurie Theorem}\mbox{}\\
Given an adjunction of ordinary categories $F: C\rightleftarrows D: G$, the classical Barr-Beck theorem gives an equivalence between $T \dash algebras$ and $D$ where $T$ is the monad $T= G \circ F$, provided that $G$ is conservative and preserves certain colimits.  The Barr-Beck theorem is a useful tool for identifying categories as categories of modules.

In \cite[4.7]{HA}, Lurie provides an $\oo$-categorical version of the Barr-Beck theorem.  Namely, if $F: C \rightleftarrows D: G$ is an adjunction of $\oo$-categories, then there is an equivalence between $T \dash algebras$  and $D$ as long as $G$ is conservative and preserves certain colimits (i.e. so-called G-split simplicial objects).  

A corollary, \cite[4.7.4.16]{HA}, of the $\oo$-categorical Barr-Beck theorem will provide the machinery for our main theorem, we record it here for use in the next section.
\begin{thm}
Suppose we are given a commutative diagram of $\oo$-categories
\[\xymatrix{
C  \ar[dr]_{G} \ar[rr]^{U} && C' \ar[dl]^{G'}  \\
  & D &
}\]

Assume that:
\begin{enumerate}
\item The functors $G$ and $G'$ admit left adjoints $F$ and $F'$.
\item $C$ admits geometric realizations of simplicial objects, which are preserved by $G$.
\item $C'$ admits geometric realizations of simplicial objects, which are preserved by $G'$.
\item The functors $G$ and $G'$ are conservative.
\item For each object $d \in D$, the unit map $d \to GF(d) \simeq G'(UF(d))$ induces an equivalence $G'F'(d) \to GF(d)$ in $D$.
\end{enumerate}
Then $U$ is an equivalence of $\oo$-categories.
\end{thm}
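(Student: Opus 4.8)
# Proof Proposal for the Barr–Beck Corollary (Theorem via \cite[6.6.2.14]{HA})

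The plan is to deduce this statement directly from Lurie's $\oo$-categorical Barr–Beck–Lurie theorem \cite[6.6.2.5]{HA} applied to the adjunction $(F, G)$, by identifying $U$ with the comparison functor to the category of algebras over the monad $T = G \circ F$. First I would observe that hypotheses (1)–(4) are precisely the conditions needed to invoke \cite[6.6.2.5]{HA} for $G$: since $G$ admits a left adjoint $F$, is conservative, and $C$ admits geometric realizations of simplicial objects preserved by $G$, the monadicity theorem yields an equivalence $C \simeq \mathrm{Mod}_T(D)$, where $T = G \circ F$ is the associated monad on $D$ and the equivalence is compatible with the forgetful functor to $D$ (i.e. the resulting triangle over $D$ commutes up to equivalence). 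Symmetrically, applying \cite[6.6.2.5]{HA} to $(F', G')$ gives an equivalence $C' \simeq \mathrm{Mod}_{T'}(D)$ over $D$, where $T' = G' \circ F'$.

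The next step is to compare the two monads $T$ and $T'$. Using the commutativity of the given triangle, $G = G' \circ U$, so for each $d \in D$ there is a canonical map $G'F'(d) \to G'UF(d) = GF(d)$ arising from the universal property of the adjunction $(F', G')$ applied to the map $F'(d) \to UF(d)$ adjoint to the unit $d \to GF(d)$; this is exactly the map appearing in hypothesis (5). Hypothesis (5) asserts that this natural map $T'(d) = G'F'(d) \to GF(d) = T(d)$ is an equivalence for every $d$, and one checks (by naturality of units and the mate construction) that this equivalence is compatible with the monad structures, hence induces an equivalence of monads $T' \xrightarrow{\sim} T$. An equivalence of monads induces an equivalence on module $\oo$-categories $\mathrm{Mod}_{T'}(D) \simeq \mathrm{Mod}_T(D)$ over $D$.

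Finally I would assemble the diagram: we have equivalences $C \simeq \mathrm{Mod}_T(D)$ and $C' \simeq \mathrm{Mod}_{T'}(D) \simeq \mathrm{Mod}_T(D)$, all over $D$, and it remains to check that under these identifications the functor $U \colon C \to C'$ corresponds to the identity on $\mathrm{Mod}_T(D)$ (or at least to an equivalence). This follows because $U$ is a functor over $D$ intertwining the left adjoints up to the coherence data of (5): the monadic comparison functor is characterized by its compatibility with the forgetful and free functors, and $U$ respects both by hypothesis. Therefore $U$ is an equivalence of $\oo$-categories. The main obstacle I anticipate is the coherence bookkeeping in the second and third steps — namely, promoting the pointwise equivalence $T'(d) \to T(d)$ of hypothesis (5) to an equivalence of monad objects (compatibility with multiplication and unit to all higher coherences), and checking that $U$ is genuinely identified with the induced functor on module categories rather than merely commuting with the forgetful functors; in the $\oo$-categorical setting these are not formal and rely on the full strength of the naturality statements in \cite[6.6.2.5]{HA} and the functoriality of the $\mathrm{Mod}_{(-)}(D)$ construction in the monad variable.
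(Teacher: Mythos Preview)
The paper does not prove this statement at all: it merely records it as a citation of \cite[6.6.2.14]{HA} (``we record it here for use in the next section''), so there is no proof in the paper to compare against. Your sketch is essentially the standard argument by which this corollary is deduced from the monadicity theorem \cite[6.6.2.5]{HA}: apply Barr--Beck to each adjunction separately, identify the induced monads via hypothesis (5), and conclude that $U$ corresponds to restriction along an equivalence of monads. This is correct in outline.

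The obstacles you flag at the end are the genuine content. In the $\oo$-categorical setting one does not literally check that the natural transformation $T' \to T$ is compatible with multiplication and unit by hand; rather, the monad map arises automatically from the commutative triangle and the adjunction data (the functor $U$ over $D$ induces a map of the associated endomorphism monads via the mate calculus, and this is packaged functorially in Lurie's treatment of monadic adjunctions). Similarly, the identification of $U$ with the induced functor on module categories is part of the functoriality of the comparison functor in \cite[4.7.3]{HA}. So your concern is well-placed, but the resolution is to invoke that functoriality rather than to verify coherences directly; attempting the latter would be both unnecessary and infeasible. If you were to write this out in full, the cleanest route is to note that both $G$ and $G'$ exhibit their sources as monadic over $D$, that $U$ is a morphism of monadic functors over $D$, and then appeal to the fact (implicit in \cite[4.7.3.16]{HA} and surrounding material) that such a morphism is an equivalence iff the induced map of monads is, which is exactly hypothesis (5).
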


%%%%%%%%%%%%%%%%%%%%%
\section{The Main Theorem}\mbox{}\\
We now prove the main theorem of this paper.  We show that if $R$ is an $E_{\oo}$-ring spectrum, then the $\oo$-category of module categories over $\Perf(R)$ in $Cat^{perf}_{\oo}$ corresponds to the underlying $\oo$-category of categories enriched in $R$-module spectra with weak equivalences as Morita equivalences.  Note that in particular, $Hk$ is an $E_{\oo}$-ring spectrum, and this will provide the link between dg categories over $k$ and $k$-linear stable $\oo$-categories.  We will discuss this more in the next section.  We first state and prove our theorem.  

Let $W$ be the class of Morita equivalences in $Cat_{\cS}$ and let $W'$ be the class of Morita equivalences in $\Mod_{\Perf(R)^{\cell}}(Cat_{\cS})$. Recall these Morita equivalences are determined by the forgetful functor to $Cat_{\cS}$. The natural monoidal functor 
$$\theta: N(Cat_{\cS}^{flat})^{\otimes} \to N(Cat_{\cS}^{flat})[W^{-1}]^{\otimes}$$ 
defined by the localization of symmetric monoidal $\oo$-categories (which exists by \cite[4.1.3.4]{HA}) determines a natural map 
$$\theta: N(\Mod_{\Perf(R)^{\cell}}(Cat_{\cS})^{flat})[W'^{-1}] \rightarrow \Mod_{\Perf(R)}(N(Cat_{\cS}^{flat})[W^{-1}]^{\otimes}).$$ 
This map $\theta$ is well-defined because the nerve of a module category over $\Perf(R)^{\cell}$ in $Cat_{\cS}$ is naturally a module category over the $\oo$-category $\Perf(R)$. This follows by \ref{nerve} and by the observations that the $\oo$-category associated to $\Perf(R)^{\cell}$ is $\Perf(R) \in Cat^{perf}_{\oo}$ by \ref{cell1}.

\begin{thm} \label{main} The functor 
$$\theta: N(\Mod_{\Perf(R)^{\cell}}(Cat_{\cS})^{flat})[W'^{-1}] \rightarrow \Mod_{\Perf(R)}(N(Cat_{\cS}^{flat})[W^{-1}]^{\otimes})$$
is an equivalence of presentable $\oo$-categories.  
\end{thm}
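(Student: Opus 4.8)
The plan is to apply the $\oo$-categorical Barr–Beck corollary (the last theorem of the previous section) to the square whose common target $D$ is $N(Cat_{\cS}^{flat})[W'^{-1}] \simeq Cat^{perf}_{\oo}$, whose left vertex $C$ is the source $N(Mod_{R \dash Mod^{cell}}(Cat_{\cS})^{flat})[W^{-1}]$, whose right vertex $C'$ is the target $Mod_{R \dash Mod}(N(Cat_{\cS}^{flat})[W'^{-1}]^{\otimes})$, and whose functor $U$ is $\theta$. The functor $G'$ is the forgetful functor from $R \dash Mod$-modules to $Cat^{perf}_{\oo}$, which admits a left adjoint $F' = R \dash Mod \otimes (-)$ by \cite[4.2.4.8]{HA} or \cite[4.2.3.7]{HA}; $G'$ is conservative and preserves geometric realizations by the module-category formalism. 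The functor $G$ should be the composite of $\theta$ with $G'$, which one checks is the $\oo$-localization of the forgetful functor $Mod_{R \dash Mod^{cell}}(Cat_{\cS}) \to Cat_{\cS}$; since this forgetful functor is (left and right) Quillen for the model structures supplied earlier, its derived functor preserves geometric realizations and is conservative (Morita equivalences in the source are \emph{defined} via the forgetful functor to $Cat_{\cS}$, so conservativity is essentially tautological).

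First I would verify hypotheses (1)–(4): the existence of the left adjoint $F$ to $G$, which I would get by deriving the free-module functor $R \dash Mod^{cell} \wedge (-)$ on spectral categories — this is where the flat/pointwise-cofibrant replacement machinery of the previous section is used so that the smash product computes the derived smash product — together with the fact that the model-categorical adjunction passes to an adjunction of underlying $\oo$-categories by \cite[5.2.4.6]{T}; properties (2) and (3) (existence and preservation of geometric realizations) follow since all the $\oo$-categories in sight are presentable (combinatorial model categories, \cite[1.3.4.22]{HA}) and the forgetful functors are accessible and preserve sifted colimits; property (4) for $G$ is the tautology noted above and for $G'$ is standard. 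The real content is hypothesis (5): for each small stable idempotent-complete $\oo$-category $d$, the comparison $G'F'(d) \to GF(d)$ must be an equivalence. Unwinding, $GF(d)$ is the underlying stable $\oo$-category of the spectral category obtained by freely adjoining an $R \dash Mod^{cell}$-action to a flat model of $d$, while $G'F'(d)$ is $R \dash Mod \otimes d$ computed in $(Cat^{perf}_{\oo})^{\otimes}$.

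The key step (5) is therefore a statement that "base change along $R$ commutes with passing from spectral categories to $\oo$-categories": one must identify the derived free $R \dash Mod^{cell}$-module on a spectral category $\cA$ with $R \dash Mod \otimes_{Sp} (\text{underlying } \oo\text{-category of }\cA)$ inside $Cat^{perf}_{\oo}$. I would prove this by reducing to the generating case $\cA = \mathbb{S}$ (the sphere, i.e.\ the one-object spectral category on the sphere spectrum), where $F(\mathbb{S})$ is $R \dash Mod^{cell}$ itself, whose underlying $\oo$-category is $R \dash Mod \simeq R \dash Mod \otimes Sp$ by \ref{cell} and the identification $N(Cat_{\cS}^{flat})[W'^{-1}]^{\otimes} \simeq (Cat^{perf}_{\oo})^{\otimes}$ from the previous section; then one bootstraps to general $d$ because both $F'$ and the composite $(\text{localization})\circ F$ preserve colimits and every object of $Cat^{perf}_{\oo}$ is built under colimits (and retracts) from the unit $Sp$, using that $\theta$ is a symmetric monoidal functor so it carries the free-module monad on the source to the free-$R\dash Mod$-module monad on the target. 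I expect this compatibility of the two free-module constructions — essentially a Beck–Chevalley / monoidality check threading the flat-replacement and nerve-is-monoidal (\ref{nerve}) results together — to be the main obstacle; everything else is bookkeeping with presentability and the Barr–Beck corollary. Finally, once $U = \theta$ is shown to be an equivalence, stability and presentability of the source follow from that of the target (an idempotent-complete presentable stable $\oo$-category by the corollary proved above), completing the proof.
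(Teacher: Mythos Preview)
Your approach is essentially identical to the paper's: the same Barr--Beck diagram over $D = N(Cat_{\cS}^{flat})[W'^{-1}]$, the same verification of hypotheses (1)--(4), drawing on the same sources (combinatorial model structure for presentability of the left vertex, \cite[4.2.3.7]{HA} for the right vertex, \cite[4.2.4.8]{HA} for the left adjoint $F'$, the right-Quillen property for $F$, the definitional conservativity of $G$, and \cite[4.2.3.2, 4.2.3.5]{HA} for $G'$).

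The only difference is in step~(5). The paper handles it in one line: $F'(\cC) \simeq R\dash Mod \otimes \cC$ by \cite[4.2.4.8]{HA}, while $F$ is the derived functor of $R\dash Mod^{cell} \wedge (-)$, which under the symmetric monoidal equivalence $N(Cat_{\cS}^{flat})[W'^{-1}]^{\otimes} \simeq (Cat^{perf}_{\oo})^{\otimes}$ is again $R\dash Mod \otimes (-)$; hence $GF \simeq G'F'$. Your own monoidality remark (``$\theta$ carries the free-module monad on the source to the free-$R\dash Mod$-module monad on the target'') is exactly this argument and already suffices. The detour you propose --- reduce to the unit $\mathbb{S}$ and then bootstrap using that ``every object of $Cat^{perf}_{\oo}$ is built under colimits (and retracts) from the unit'' --- is both unnecessary and dubious as stated: it is not clear that $Sp^{\omega}$ generates $Cat^{perf}_{\oo}$ under colimits in the required sense. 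Drop that step and rely on the monoidality you already noted; then your proof matches the paper's.
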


\begin{proof}
As discussed in the previous section, the proof of this theorem is an application of the Barr-Beck-Lurie theorem.  Namely, we show that this diagram satisfies the hypotheses of \cite[4.7.4.16]{HA}:

Consider the diagram 
\xymatrixcolsep{-0.35in}
\[\xymatrix{
N(\Mod_{\Perf(R)^{\cell}}(Cat_{\cS})^{flat})[W'^{-1}]  \ar[dr]_{G} \ar[rr]^{\theta} && \Mod_{\Perf(R)}(N(Cat_{\cS}^{flat})[W^{-1}]^{\otimes}) \ar[dl]^{G'}  \\
  & N(Cat_{\cS}^{flat})[W^{-1}]  &
}\]

\begin{enumerate}
\item The two $\oo$-categories given by the source and target of $\theta$ admit geometric realizations of simplicial objects. In fact, both of these categories are presentable $\oo$-categories.  The former is a presentable $\oo$-category by \ref{presentable}. The latter is a presentable $\oo$-category by \cite[4.2.3.7]{HA}, that is, $\oo$-categories of modules over a presentable $\oo$-category is a presentable $\oo$-category.  

\item The functors $G$ and $G'$ admit left adjoints $F$ and $F'$.  The existence of a left adjoint to G follows from the fact that $G$ preserves limits. That is, limits in  $N(\Mod_{\Perf(R)^{\cell}}(Cat_{\cS})^{flat})[W'^{-1}]$ are computed in $N(Cat_{\cS}^{flat})[W^{-1}]$. The existence of a left adjoint to $G'$ follows from \cite[4.2.4.8]{HA}. 

\item The functor $G'$ is conservative and preserves geometric realizations.  The former is true by \cite[4.2.3.2]{HA}. The latter is true by \cite[4.2.3.5]{HA}. 

\item The functor $G$ is conservative and preserves geometric realizations. The first assertion is immediate from the definition of the weak equivalences in 
$\Mod_{\Perf(R)^{\cell}}(Cat_{\cS}^{flat})$. For the second assertion, it suffices to show that the left action of $\Perf(R)^{\cell}$ on $Cat_{\cS}^{flat}$ preserves colimits. But this follows from the definition of the pointwise smash product of spectral categories.

\item The morphism $G' \circ F' \to G \circ F$ is an equivalence.  This follows because the natural map $\cC \to \Perf(R) \otimes \cC$ induces and equivalence $G'F'(\cC) \cong \Perf(R) \otimes \cC$ by \cite[4.2.4.8]{HA}.
\end{enumerate}
Therefore, the functor is an equivalence.
\end{proof}

Using the equivalence
$$N(Cat_{\cS}^{flat})[W^{-1}]^{\otimes} \simeq (Cat^{perf}_{\oo})^{\otimes}$$
we have:

\begin{cor} There is an equivalence of $\oo$-categories
$$N(\Mod_{\Perf(R)^{\cell}}((Cat_{\cS})^{flat})[W'^{-1}] \simeq \Mod_{\Perf(R)}((Cat^{perf}_{\oo})^{\otimes}).$$
\end{cor}

Using our main theorem above and the equivalence of $\oo$-categories \ref{newcomparison}
$$N(Cat_{R \dash \Mod})[W^{-1}] \simeq N(\Mod_{\Perf(R)^{\cell}}(Cat_{\cS}))[W'^{-1}].$$
we deduce the following corollary.

\begin{cor} \label{cor1}
There is an equivalence of $\oo$-categories
$$ N(Cat_{R \dash \Mod})[W^{-1}] \simeq \Mod_{\Perf(R)}((Cat^{perf}_{\oo})^{\otimes}).$$
\end{cor}

Applying this theorem to the Eilenberg-MacLane ring spectrum $Hk$ yields the following corollary.

\begin{cor} \label{cor2}
There is an equivalence of $\oo$-categories
$$ N(Cat_{Hk \dash \Mod})[W^{-1}] \simeq \Mod_{\Perf(Hk)}((Cat^{perf}_{\oo})^{\otimes}).$$
\end{cor}

We now combine this result with the Quillen equivalence \cite{Tab1}
$$Cat_{Hk \dash \Mod} \simeq Cat^k_{dg},$$ 
to obtain our main corollary.

\begin{cor} \label{cor3}
There is an equivalence of $\oo$-categories
$$ N(Cat^k_{dg})[W^{-1}] \simeq \Mod_{\Perf(Hk)}((Cat^{perf}_{\oo})^{\otimes}).$$ In other words, the underlying $\oo$-category of the Morita model category structure on $Cat^k_{dg}$ is equivalent to the $\oo$-category of idempotent-complete $k$-linear small stable $\oo$-categories. 
\end{cor}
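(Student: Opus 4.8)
The plan is to transport Corollary \ref{cor2} across the enriched Dold--Kan correspondence. First I would recall Tabuada's theorem \cite{Tab1}: the Dold--Kan equivalence $Ch(k) \simeq Hk \dash Mod$ induces a Quillen equivalence between $Cat^k_{dg}$ and $Cat_{Hk \dash Mod}$, where both sides carry their Dwyer--Kan model structures. Because a Quillen equivalence induces an equivalence of underlying $\oo$-categories (see \ref{modeloo}, via \cite[5.2.4.6]{T}), this already identifies the DK-localizations of $N(Cat^k_{dg})$ and $N(Cat_{Hk \dash Mod})$.

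The key step is to upgrade this to the Morita model structures. Each Morita model structure is a left Bousfield localization of the corresponding DK model structure at the Morita equivalences (the Proposition of Section 2.2 for dg categories, the Remark of Section 3.1 for spectral categories), with the same cofibrations. By Proposition \ref{pretridgsc}, a dg category is pretriangulated exactly when its associated spectral category is, and two dg categories are Morita equivalent exactly when their associated spectral categories are. Hence Tabuada's Quillen pair carries Morita equivalences to Morita equivalences in both directions and matches the fibrant objects of one Morita structure with those of the other, so it remains a Quillen equivalence after Bousfield-localizing. Passing to underlying $\oo$-categories as before yields
$$N(Cat^k_{dg})[W^{-1}] \simeq N(Cat_{Hk \dash Mod})[W^{-1}],$$
where $W$ now denotes the Morita equivalences on each side.

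Composing this with Corollary \ref{cor2} gives
$$N(Cat^k_{dg})[W^{-1}] \simeq N(Cat_{Hk \dash Mod})[W^{-1}] \simeq Mod_{Hk \dash Mod}(N(Cat_{\cS}^{flat})[W'^{-1}]^{\otimes}),$$
and since $N(Cat_{\cS}^{flat})[W'^{-1}] \simeq Cat^{perf}_{\oo}$, the right-hand side is the $\oo$-category of module categories over $Hk \dash Mod$ in idempotent-complete small stable $\oo$-categories, i.e.\ the $\oo$-category of $k$-linear stable $\oo$-categories. The step I expect to be the main obstacle is the second paragraph: checking that Tabuada's Quillen equivalence, formulated for the DK structures, descends to the Morita-localized structures. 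This rests on the identification of fibrant objects and of Morita equivalences supplied by Proposition \ref{pretridgsc}, together with the compatibility of the derived functors with the passage to perfect modules; granting these, the descent to the localizations is formal.
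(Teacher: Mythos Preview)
Your proposal is correct and follows essentially the same approach as the paper: the paper simply states that Corollary~\ref{cor3} follows by combining Corollary~\ref{cor2} with Tabuada's Quillen equivalence \cite{Tab1} (invoking Proposition~\ref{pretridgsc} in the introduction), and you have written out precisely this argument with the details of why the DK-level Quillen equivalence descends to the Morita localizations. There is nothing to add.
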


Applying the $\Ind$ functor, we have a corresponding statement for large $k$-linear stable $\oo$-categories.

\begin{prop} \label{smalltolarge}
There is an equivalence of $\oo$-categories
$$\Ind: \Mod_{\Perf(Hk)}((Cat^{perf}_{\oo})^{\otimes}) \simeq \Mod_{Hk \dash \Mod}((\cP r^{L}_{st, \omega})^{\otimes})$$
\end{prop}

\begin{proof} 
First, $Cat^{perf}_{\oo}$ and $\cP r^{L}_{st, \omega}$ are symmetric monoidal $\oo$-categories and the $\Ind$-construction is symmetric monoidal by \cite[4.4]{BFN}.   
That is, there is a commutative diagram
\[\xymatrix{
(Cat^{perf}_{\oo})^{\otimes} \ar[dr] \ar[rr]^{\Ind}_{\simeq} & & (\cP r^{L}_{st, \omega})^{\otimes} \ar[dl] \\
 & N(\Gamma)^{\otimes}
}\]

Second, $Ass$-algebra objects and left module objects can be defined in any symmetric monoidal $\oo$-category. Since the functor $\Ind$ induces an equivalence of symmetric monoidal $\oo$-categories, there are induced equivalences
\[\xymatrix{
Alg_{Ass}((Cat^{perf}_{\oo})^{\otimes} \ar[r]^{\Ind}_{\simeq} & Alg_{Ass}((\cP r^{L}_{st, \omega})^{\otimes})
}\]
and
\[\xymatrix{
Alg_{\cL \cM}((Cat^{perf}_{\oo})^{\otimes} \ar[r]^{\Ind}_{\simeq} & Alg_{\cL \cM}((\cP r^{L}_{st, \omega})^{\otimes})
}\]
by \ref{andrew}.

Finally, the $\oo$-category $\Mod_{\Perf(Hk)}((Cat^{perf}_{\oo})^{\otimes})$ is realized as the fiber product
$$Alg_{\cL \cM}((Cat^{perf}_{\oo})^{\otimes}) \times_{Alg_{Ass}((Cat^{perf}_{\oo})^{\otimes})} \Perf(Hk).$$

Similarly, the $\oo$-category $\Mod_{Hk \dash \Mod}((\cP r^{L}_{st, \omega})^{\otimes})$ is realized as the fiber product
$$Alg_{\cL \cM}((\cP r^{L}_{st, \omega})^{\otimes}) \times_{Alg_{Ass}((\cP r^{L}_{st, \omega})^{\otimes})} Hk \dash \Mod.$$

Thus, since $\Ind(\Perf(Hk)) \simeq Hk \dash \Mod$, the $\Ind$ functor induces a well-defined equivalence
\[\xymatrix{
Alg_{\cL \cM}((Cat^{perf}_{\oo})^{\otimes}) \times_{Alg_{Ass}((Cat^{perf}_{\oo})^{\otimes})} \Perf(Hk) \ar[r]^{\Ind}_{\simeq} &Alg_{\cL \cM}((\cP r^{L}_{st, \omega})^{\otimes}) \times_{Alg_{Ass}((\cP r^{L}_{st, \omega})^{\otimes})} Hk \dash \Mod
}\]

The result follows.
\end{proof}

We now have our second main corollary.
\begin{cor} \label{cor4}
There is an equivalence of $\oo$-categories
$$ N(Cat^k_{dg})[W^{-1}] \simeq \Mod_{Hk \dash \Mod}((\cP r^{L}_{st, \omega})^{\otimes}).$$ In other words, the underlying $\oo$-category of the Morita model category structure on $Cat^k_{dg}$ is equivalent to the $\oo$-category of compactly-generated presentable $k$-linear stable $\oo$-categories with functors that preserve compact objects and colimits. 
\end{cor}

%%%%%%%%%%%%%%%%%%%%%%%%%%%

\end{document}